\numberwithin{equation}{section}
\newtheorem{theorem}{Theorem}[section]
\newtheorem{definition}[theorem]{Definition}
\newtheorem{lemma}[theorem]{Lemma}
\newtheorem{remark}[theorem]{Remark}
\newtheorem{prop}[theorem]{Proposition}
\newtheorem*{nonothm}{Theorem}
\newtheorem*{nonocor}{Corollary}
\renewenvironment{proof}[1][\proofname. ]{ { \noindent \it #1}}{\qed \\}
\newcommand{\atl}{a_{t,\lambda}}
\newcommand{\btl}{b_{t,\lambda}}
\newcommand{\AI}{A_\infty}
\newcommand{\CG}{\mathcal G}
\newcommand{\LL}{\mathbb{L}}
\newcommand{\RR}{\mathbb{R}}
\newcommand{\CC}{\mathbb{C}}
\newcommand{\ZZ}{\mathbb{Z}}
\newcommand{\PP}{\mathbb{P}}
\newcommand{\bk}{\boldsymbol{k}}
\newcommand{\CB}{\mathcal{B}}
\newcommand{\CE}{\mathcal{E}}
\newcommand{\CF}{\mathcal{F}}
\newcommand{\CL}{\mathcal{L}}
\newcommand{\CM}{\mathcal{M}}
\newcommand{\Hom}{{\rm Hom}}
\newcommand{\locmir}{\mathcal{L}\mathcal{M}}
\newcommand{\calC}{\mathcal{C}}
\newcommand{\CO}{\mathcal O}
\newcommand{\id}{{\rm id}}
\newcommand{\CS}{\mathcal{S}}
\newcommand{\ind}{{\rm ind}}
\newcommand{\ltl}{{\LL_{t,\lambda}}}
\newcommand{\Ctl}{c_{t,\lambda}}
\newcommand{\ctl}{c_{t,\lambda}}
\newcommand{\CAtl}{A_{t,\lambda}}
\newcommand{\CBtl}{B_{t,\lambda}}
\begin{document}
\title{Noncommutative homological mirror symmetry of elliptic curves}
\author{Sangwook Lee}
\address{Korea Institute for Advanced Study\\ 85 Hoegi-ro Dongdaemun-gu, Seoul 02455, South Korea}
\email{swlee@kias.re.kr}

\begin{abstract}
We prove an equivalence of two $\AI$-functors, via Orlov's Landau-Ginzburg/Calabi-Yau correspondence. One is the Polishchuk-Zaslow's mirror symmetry functor of elliptic curves, and the other is a localized mirror functor from the Fukaya category of $T^2$ to a category of noncommutative matrix factorizations. As a corollary we prove that the noncommutative mirror functor $\locmir_{gr}^{\LL_t}$ realizes homological mirror symmetry for any $t$.
\end{abstract}
\maketitle
\smallskip
\noindent \textbf{Keywords :} Homological mirror symmetry, noncommutative mirror functor, elliptic curve.

\smallskip
\noindent \textbf{Mathematics Subject Classification 2010 :} 53D37.
\section{Introduction}
Since Kontsevich's Homological Mirror Symmetry conjecture, the understanding on various topics around it(such as Fukaya categorires, derived categories, Calabi-Yau manifolds, toric varieties and so on) has increased a lot. The conjecture itself was also proven for many cases, for example for any projective Calabi-Yau hypersurfaces(see \cite{PZ,AS,Seidelquar,She}). People have also tried to deal with cases beyond Calabi-Yau manifolds, by taking mirror partners as Landau-Ginzburg models.

There have been also attempts to achieve more conceptual understanding of homological mirror symmetry. One of such attempts is to construct homological mirror functors in more explicit way. Among works towards this question we follow the idea of Cho-Hong-Lau\cite{CHLau1,CHLau2}, who proposed a geometric way to construct homological mirror symmetry functors for some cases. The idea is to fix a weakly unobstructed reference Lagrangian $\LL$ which is able to generate the Fukaya category, and then take a matrix factorization of the difference of potential functions for $\LL$ and any weakly unobstructed Lagrangian.

In this paper we elaborate on the comparison of different kinds of mirror symmetries of elliptic curves, continuing the work in \cite{L}. In \cite{L} the author compared two mirror symmetries, one of which has the B-model as a derived category, and the other as a category of matrix factorizations of a commutative polynomial. The former is given by \cite{PZ,AS}, and the latter is due to \cite{CHLau1} which was mentioned above. On the other hand, their recent work \cite{CHLau2} gives a  two parameter family of mirror partners of elliptic curves, which come from deformations of the reference Lagrangian. A generic member of the family is now a category of matrix factorizations of a central element in a {\em noncommutative} ring. 

The noncommutativity arises due to the loss of symmetries among holomorphic triangles under the deformation, but the noncommutative potentials are elements of so-called {\em Sklyanin algebras} whose properties are relatively well known. We recall its definition here.
\begin{definition}\label{def:sklyanin}
Let $\Lambda$ be a base field.
\[ Sky(a,b,c):=\frac{\Lambda\langle x,y,z\rangle}{(axy+byx+cz^2,ayz+bzy+cx^2,azx+bxz+cy^2)}\] is called a {\em Sklyanin algebra} over $\Lambda$.
\end{definition}

We consider an element $W^\ltl$ of a Sklyanin algebra $Sky(\atl,\btl,\ctl)$ over the Novikov field $\Lambda$, for any $-1<t<1$ and $\lambda \in U(1)$:
\begin{align*} W^\ltl 
=&-\frac{\pi i q_0^{(3t+1)^2}\lambda^{\frac{1+3t}{2}}}{6}\Big(a'(u)(xyz+zxy+yzx)+b'(u)(zyx+xzy+yxz)\end{align*}
\[ +c'(u)(x^3+y^3+z^3)\Big),\]
\[ a(u):=q_0^{-(3t+1)^2}\lambda^{-\frac{1+3t}{2}}\atl,
\; b(u):=q_0^{-(3t+1)^2}\lambda^{-\frac{1+3t}{2}}\btl,\]
\[c(u):=q_0^{-(3t+1)^2}\lambda^{-\frac{1+3t}{2}}c_{t,\lambda}\]
where 
\[ \atl=\lambda^{\frac{1+3t}{2}}\big(\sum_{k\in \ZZ}\lambda^{3k}q_0^{(6k+1+3t)^2}\big),\;\btl=\lambda^{\frac{1+3t}{2}}\big(\sum_{k\in \ZZ}\lambda^{3k+2}q_0^{(6k+5+3t)^2}\big),\]
\[\ctl=\lambda^{\frac{1+3t}{2}}\big(\sum_{k\in \ZZ}\lambda^{3k+1}q_0^{(6k+3+3t)^2)}\big).\]
As its notation suggests, it is shown in \cite{CHLau2} that $W^\ltl$ is the noncommutative potential of the immersed Lagrangian $\ltl \subset \PP^1_{3,3,3}$ whose construction is in Section \ref{sec:ncfunctor}. By Artin-Tate-van den Bergh's result \cite{ATV}, the Sklyanin algebras $Sky(\atl,\btl,\ctl)$ are twisted homogeneous coordinate rings of an elliptic curve, with twisting automorphisms can be described explicitly. Then Artin-van den Bergh's result \cite{AV} enables us to think of the derived category of modules over twisted homogeneous coordinate ring modulo torsion as usual derived category of coherent sheaves on the elliptic curve.

Now we are ready to state our main theorem.

\begin{nonothm}[Theorem \ref{thm:main}]
Let $X$ be the mirror cubic ${\mathrm{Proj}}(R/W^{\LL_{0,-1}})$, where \[R=Sky(a_{0,-1},b_{0,-1},c_{0,-1})\cong\Lambda[x,y,z].\] Then we have the following commutative diagram of functors
\[\xymatrixrowsep{1pc}\xymatrix{
 D^\pi Fu(T^2) \ar[rr]^{\CS_i} \ar[dd]_{\mathrm{Polishchuk-Zaslow}}^{\widetilde{\CF}} & & D^\pi Fu(T^2)\ar[dd]^{\locmir_{gr}^{\ltl}} \\
 &  & \\
  D^b Coh(X) \ar[rr]_{\mathrm{Orlov}}^{\CG_i^{t,\lambda}} & &HMF_\ZZ(W^\ltl),}\]
  where
 \begin{itemize} 
  \item $D^\pi Fu(T^2)$ is the split-closed derived Fukaya category of $T^2$, i.e. the split-closure of the triangulated envelope of $Fu(T^2)$,
  \item $\widetilde{\CF}$ is the Polishchuk-Zaslow's mirror equivalence, 
  \item $\CG_i^{t,\lambda}$ is Orlov's LG/CY correspondence functor, 
  \item $\CS_i$ is induced by a symplectomorphism followed by a shift, 
  \item $\locmir_{gr}^{\ltl}$ is the localized mirror functor with respect to the Lagrangian $\ltl$.
  \end{itemize}

\end{nonothm}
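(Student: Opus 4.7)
The plan is to verify the commutativity of the diagram on a split-generating set of Lagrangians and then extend by the universal property of $D^\pi Fu(T^2)$. A finite collection of straight-line Lagrangians in the sense of Polishchuk-Zaslow split-generates the derived Fukaya category of $T^2$, so it suffices to produce a coherent $A_\infty$-natural isomorphism between $\CG_i^{t,\lambda} \circ \widetilde{\CF}$ and $\locmir_{gr}^{\ltl} \circ \CS_i$ on this finite set of objects and on the morphism spaces between them. The previous work \cite{L} already settled the special case $(t,\lambda)=(0,-1)$ in which the Sklyanin algebra degenerates to the polynomial ring and the mirror potential is commutative; the task is to transport that comparison to arbitrary $(t,\lambda)$.

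For each generator $L_j$, I would first compute the B-side composition by applying $\widetilde{\CF}$ to identify $L_j$ with a line bundle or twisted skyscraper on $X$, and then applying Orlov's functor $\CG_i^{t,\lambda}$, which by the standard resolution recipe outputs a $\ZZ$-graded matrix factorization of $W^\ltl$ whose entries are ordered polynomials in the Sklyanin generators $x,y,z$. For the A-side composition, the Cho-Hong-Lau construction yields $\locmir_{gr}^{\ltl}(\CS_i(L_j))$ as a matrix factorization whose underlying module is $CF^*(\ltl,\CS_i(L_j))$ and whose differential counts holomorphic polygons bounded by $\CS_i(L_j)$ and copies of $\ltl$, weighted by the class in $CF^1(\ltl,\ltl)$ that deforms the reference. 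The role of $\CS_i$ is precisely to place $L_j$ in the isotopy class best adapted to $\ltl$, so that these polygons are in explicit bijection with those bounded by $L_j$ and $\LL_{0,-1}$ at the base parameter, with areas and holonomies rescaled by $\atl,\btl,\ctl$. The comparison of Floer structure constants then reduces to theta-function identities on the torus, of the same shape as those used in \cite{PZ,L}, now carrying the extra parameters $\atl,\btl,\ctl$; crucially, the Sklyanin relations are themselves theta identities, which is what allows the commutative comparison to propagate to the noncommutative setting.

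The principal obstacle will be the noncommutative bookkeeping. Since $xy\ne yx$ in $Sky(\atl,\btl,\ctl)$, the order in which intersection points appear in the $A_\infty$-products of the Fukaya category matters: one must track the cyclic order of Lagrangian boundary components of each polygon to match the ordered monomials that appear in Orlov's explicit formula for $\CG_i^{t,\lambda}$. I expect this ordering to be dictated exactly by the Maslov-graded lift of the symplectomorphism entering $\CS_i$ together with the specific bounding cochain in $CF^1(\ltl,\ltl)$ used to construct $\locmir_{gr}^{\ltl}$, and the consistency check should ultimately reduce to verifying that both orderings reproduce the cubic relation $W^\ltl$ itself on the nose. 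Once the diagram is established on generators, the commutativity on morphisms follows from an analogous polygon-count comparison, and the full statement follows by passing to the split-closed derived envelope.
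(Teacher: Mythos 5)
Your overall skeleton agrees with the paper's: both restrict to the two split-generators $L_{(1,0)}$ and $L_{(1,-3)}$ (mirror to $\CO_X$ and $\CO_X(1)$), compare the two compositions on these objects and on the morphisms between them, and then extend to all $i$ and to the split-closed derived envelope. However, the mechanism you propose for the actual comparison --- matching all Floer structure constants against Orlov's output via theta-function identities carrying the parameters $\atl,\btl,\ctl$ --- is not what makes the noncommutative case tractable, and as stated it runs into exactly the obstacle the paper is designed to avoid. To write down $\CG_0^{t,\lambda}(\CO_X)$ explicitly you need the stabilization of $\bk$ over $\CBtl=\CAtl/W^\ltl$, i.e.\ the homotopy $s_1$ with $s_0s_1+s_1s_0=W^\ltl\cdot 1$ on the linear resolution of $\bk$; the paper points out that in the Sklyanin algebra $s_1$ cannot be easily computed by solving the homotopy equations (unlike the commutative case in \cite{L}), so a term-by-term identification of the quadratic entries with polygon counts is not available. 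Your proposal gives no substitute for this step.

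The paper's actual route supplies three inputs you are missing. First, Koszulness of Sklyanin algebras (Iyudu--Shkarin) yields the explicit \emph{linear} free resolution of $\bk$ over $\CAtl$, and Dyckerhoff's nullhomotopy argument converts it into a resolution over $\CBtl$ whose even/odd folding is a matrix factorization $s_0+s_1$ of $W^\ltl$. Second --- and this is the key trick --- only the \emph{linear} entries are ever computed on the Floer side, via area-zero strips in the limit of the Hamiltonian perturbation; these are identified with the terms of the weak Maurer--Cartan relations $h_{\bar X},h_{\bar Y},h_{\bar Z}$, and the unknown quadratic entries on both sides are then pinned down (up to the relevant homotopy) by the single requirement that the total differential square to $W^\ltl$. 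This replaces your infinite family of theta identities by one structural constraint. Third, the left vertical arrow only makes sense with a $(t,\lambda)$-independent target because Artin--Tate--van den Bergh and Artin--van den Bergh identify $\CBtl$ as a twisted homogeneous coordinate ring of the \emph{same} cubic $X$, giving $D^b\mathrm{Coh}(X)\simeq D^b(\mathrm{qgr}\text{-}\CBtl)$; your proposal takes this for granted. The morphism comparison is likewise done by counting only area-zero triangles, since those already determine the morphisms of matrix factorizations up to homotopy. Without these reductions, your plan does not close.
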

We will review localized mirror functors $\locmir_{gr}^{\ltl}$ in Section \ref{sec:locmir}, and Orlov's LG/CY functors $\CG_i$ in Section \ref{sec:LGCY}. The autoequivalence $\CS_i$ will be constructed in the proof of Theorem \ref{thm:main}. According to the theorem, we have the following important result which was conjectured in \cite{CHLau2}.
\begin{nonocor}
The functor $\locmir_{gr}^{\LL_{t,\lambda}}$ is an equivalence for any $t$ and $\lambda$.
\end{nonocor}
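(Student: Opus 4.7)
The plan is to deduce the corollary directly from the commutative diagram in Theorem \ref{thm:main}. For any fixed $t \in (-1,1)$ and $\lambda \in U(1)$, that diagram reads
\[
\locmir_{gr}^{\ltl} \circ \CS_i \;=\; \CG_i^{t,\lambda} \circ \widetilde{\CF},
\]
so once we know that the other three functors $\widetilde{\CF}$, $\CS_i$, and $\CG_i^{t,\lambda}$ are equivalences, we can rearrange to get
\[
\locmir_{gr}^{\ltl} \;=\; \CG_i^{t,\lambda} \circ \widetilde{\CF} \circ \CS_i^{-1},
\]
which is a composition of equivalences, hence itself an equivalence.

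First I would invoke \cite{PZ} for $\widetilde{\CF}$, which is by definition the classical Polishchuk-Zaslow equivalence $D^\pi Fu(T^2) \simeq D^b Coh(X)$. For $\CS_i$, the construction given in the proof of Theorem \ref{thm:main} presents it as the composition of the autoequivalence of $D^\pi Fu(T^2)$ induced by a symplectomorphism of $T^2$ with a shift functor, both of which are manifestly autoequivalences. For Orlov's functor $\CG_i^{t,\lambda}$, I would appeal to the LG/CY correspondence recalled in Section \ref{sec:LGCY}: its status as an equivalence in the noncommutative setting is enabled by the Artin-Tate-van den Bergh theorem \cite{ATV} and Artin-van den Bergh's theorem \cite{AV}, invoked in the introduction, which identify the noncommutative $\Proj$ of the Sklyanin quotient $R/W^\ltl$ with the ordinary $D^b Coh(X)$ of the mirror elliptic curve. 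Thus for every $(t,\lambda)$ Orlov's theorem supplies an equivalence $D^b Coh(X) \simeq HMF_\ZZ(W^\ltl)$ with the grading conventions matching those appearing in the diagram.

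The substantive work is therefore entirely absorbed into the proof of Theorem \ref{thm:main}; once that theorem is in hand, the corollary is almost formal. The delicate point I would still check carefully is the bookkeeping around $\CG_i^{t,\lambda}$ in the genuinely noncommutative regime: namely that the source and target of Orlov's equivalence in Section \ref{sec:LGCY} really coincide with the categories $D^b Coh(X)$ and $HMF_\ZZ(W^\ltl)$ appearing in the diagram, and that the grading/shift conventions used there are compatible with those of $\CS_i$, so that the displayed equality of functors lands in the correct target. After this verification, invertibility of $\locmir_{gr}^{\ltl}$ follows at once.
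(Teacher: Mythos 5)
Your proposal is correct and matches the paper's (essentially implicit) derivation: the corollary is stated as an immediate consequence of Theorem \ref{thm:main}, obtained by writing $\locmir_{gr}^{\ltl}=\CG_i^{t,\lambda}\circ\widetilde{\CF}\circ\CS_i^{-1}$ and noting that the Polishchuk--Zaslow functor, the symplectomorphism-plus-shift autoequivalence $\CS_i$, and Orlov's functor (an equivalence here since $\CBtl$ is Gorenstein of parameter $0$) are all equivalences. Your extra care about identifying $D^b Coh(X)$ with $D^b({\rm qgr\text{-}}\CBtl)$ via Artin--Tate--van den Bergh and Artin--van den Bergh is exactly the bookkeeping the paper carries out at the start of Section \ref{sec:main}.
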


\begin{remark}
In the author's previous work \cite{L}, the above commutative diagram was verified for the commutative potential $W^{\LL_{0,-1}}$, not for all $(t,\lambda)\neq (0,-1)$. We basically follow the strategy of \cite{L}, but also give technical verifications whether the previous arguments in the commutative mirror case can be applied to the noncommutative case.
\end{remark}

The organization of the paper is as follows. We begin with preliminaries on coherent bimodules which are noncommutative analogues of coherent sheaves. Then we recall $\AI$-categories, Fukaya categories and matrix factorizations. We also introduce Cho-Hong-Lau's localized mirror functor formalism, and then recall Orlov's LG/CY correspondence. Finally, we prove our main result.

\subsection*{Acknowledgements}
The author thanks Cheol-Hyun Cho for his interest and encouragement. He is grateful to Hansol Hong who raised the question which was the original motivation of this work. The author was supported by NRF-2007-0093859.

\section{Twisted homogeneous coordinate rings}
Recall the definition of a quasi-coherent sheaf $\CF$ on a variety $X$, that for any open set $U\subset X$ its sections over $U$ are given an $\CO(U)$-bimodule structure \[ \CO(U) \otimes \CF(U) \otimes \CO(U) \to \CF(U).\] 
We slightly change the right module action using an automorphism $\sigma$ of $X$ as follows. \begin{definition}[\cite{AV}] Let $\CF$ be a quasi-coherent sheaf on $X$. A {\em quasi-coherent $\CO$-bimodule} $\CF_\sigma$ from $\CF$ is an assignment of $\CO(U)$-$\CO(\sigma U)$-bimodule $\CF_\sigma(U)$ for each open set $U$
\[\CO(U) \otimes \CF_\sigma(U) \otimes \CO(\sigma U) \to \CF_\sigma(U)\]
where $\CF_\sigma(U):=\CF(U).$ In other words, if $s\in \CF(U)$ and $a \in \CO(\sigma U)$ then the right action is twisted as $sa=a^\sigma s$, where $a^\sigma:=\sigma^*a.$
\end{definition}
Given a coherent $\CO$-bimodule $\CF_\sigma$ and a coherent sheaf $\CG$, we define their tensor products in the most natural way as following:
\[(\CG \otimes \CF_\sigma)(U):=\CG(U) \otimes_{\CO(U)} \CF_\sigma(U),\]
\[(\CF_\sigma \otimes \CG)(U):=\CF_\sigma(U) \otimes_{\CO(\sigma U)} \CG(\sigma U).\]

We also define the tensor product $\CF_\sigma \otimes \CG_\sigma$ of two coherent $\CO$-bimodules by
\[(\CF_\sigma \otimes \CG_\sigma)(U):=\CF_\sigma (U) \otimes_{\CO(\sigma U)} \CG_\sigma (\sigma U).\]
Observe that $(\CF_\sigma \otimes \CG_\sigma)(U)$ is a $\CO(U)$-$\CO(\sigma^2 U)$-bimodule. In general, $(\CF^1_\sigma \otimes \cdots \otimes \CF^i_\sigma)(U)$ is a $\CO(U)$-$\CO(\sigma^{i}U)$-bimodule.

\begin{remark}
The definition of coherent $\CO$-bimodules in \cite{AV} is given in more general context. They start from sheaves on the product $X \times X$, and equip them with left and right module structures by pushforwards via left and right projections respectively. Then the bimodules we constructed above are just special cases, given by restrictions of pullback(with respect to left projections) sheaves to the graph of $\sigma$. Then the tensor products defined above are special cases of the general definition. Since we are only interested in those cases, we do not discuss the general setting further.
\end{remark}

\begin{definition}[\cite{AV}]
Let $\CB=\bigoplus_{n \in \ZZ}\CB_n$, where $\CB_n=\CL_\sigma^{\otimes n}$ and $\CL$ is a line bundle. A graded quasi-coherent sheaf(i.e. an $\CO[t,t^{-1}]$-module where $t$ is a formal Laurent variable) $\CM$ is called a {\em graded left $\CB$-module} if there is a multiplication rule
\[\CB_i \otimes \CM_j \to \CM_{i+j}\] 
which satisfies the module axiom.
More precisely on sections, it is given by
\[\CB_i(U) \otimes_{\CO(\sigma^i U)} \CM_j(\sigma^i U) \to \CM_{i+j}(U).\]
\end{definition}
In particular, if we are given a (usual nongraded) quasi-coherent sheaf $\CF$, then $\CB \otimes \CF$ under consideration of $\CF$ being a graded left module concentrated at degree 0 gives a new graded left $\CB$-module, whose graded pieces are given by $(\CB \otimes \CF)_n=\CB_n \otimes \CF$.

Now we recall the following:

\begin{prop}[\cite{AV}]
Let $\CF$ be a quasi-coherent sheaf, and $\CM$ be a graded $\CB$-module.
The functors
$\CF \mapsto \CB \otimes \CF$
and 
$\CM \mapsto \CM_0$
are quasi-inverses which define equivalences between the categories of quasi-coherent sheaves and of graded left $\CB$-modules.
\end{prop}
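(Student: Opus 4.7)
The plan is to show that the two functors are quasi-inverse by constructing natural isomorphisms in both directions. First I would verify that both functors are well-defined. For $\CF \mapsto \CB \otimes \CF$, the graded pieces $(\CB \otimes \CF)_n = \CB_n \otimes \CF$ inherit a natural graded left $\CB$-module structure from the multiplication of $\CB$ itself. For $\CM \mapsto \CM_0$, one simply extracts the degree-zero component, which is a quasi-coherent sheaf by hypothesis.

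One direction of the quasi-inverse is immediate: $(\CB \otimes \CF)_0 = \CB_0 \otimes \CF = \CO \otimes \CF = \CF$, naturally in $\CF$. The other direction is the heart of the argument. Given a graded $\CB$-module $\CM$, the module multiplication supplies, for each $n\in\ZZ$, a canonical map
\[ \mu_n \colon \CB_n \otimes \CM_0 \longrightarrow \CM_n, \]
and assembling these into degrees defines a morphism $\CB \otimes \CM_0 \to \CM$ of graded $\CB$-modules. The task reduces to proving that each $\mu_n$ is an isomorphism.

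Here the crucial input is that $\CL$ is an honest line bundle and $\sigma$ an automorphism, so that the bimodule $\CL_\sigma$ is invertible, with inverse $(\CL^{-1})_{\sigma^{-1}}$. Consequently $\CB_n = \CL_\sigma^{\otimes n}$ is an invertible $\CO$-bimodule for every $n \in \ZZ$, and tensoring with $\CB_n$ is an equivalence on the category of quasi-coherent sheaves. I would check the claim locally: on a trivializing open set $U$ for $\CL$, the section-level map $\CB_n(U) \otimes_{\CO(\sigma^n U)} \CM_0(\sigma^n U) \to \CM_n(U)$ reduces, after killing the line-bundle twist, to the $\CO(\sigma^n U)$-action map, which is an isomorphism by the unit axiom. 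Globally, a two-sided inverse to $\mu_n$ is obtained by tensoring $\CM_n$ with $\CB_{-n}$ and invoking the associativity and unit axioms of the graded module structure on $\CM$.

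The main obstacle I anticipate is bookkeeping the twist by $\sigma$: the relevant tensor products occur over $\CO(\sigma^n U)$ rather than $\CO(U)$, and one has to ensure that the local trivializations of $\CL$ and the putative inverse bimodule $\CB_{-n}$ fit together coherently to yield a genuine two-sided inverse, and that the construction is functorial in $\CM$. Once this compatibility is settled, naturality of both isomorphisms and functoriality follow formally from the module and bimodule axioms, completing the equivalence.
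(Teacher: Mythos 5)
The paper states this proposition without proof, citing \cite{AV}, so there is no in-paper argument to compare against. Your proposal is correct and follows the standard route (essentially Dade's theorem for strongly graded rings, which is also how \cite{AV} argue): the crucial point, which you identify, is that each $\CB_n$ is an invertible $\CO$-bimodule with inverse $\CB_{-n}$, so that the action map $\CB_{-n}\otimes\CM_n\to\CM_0$, combined with associativity and the unit axiom, furnishes a two-sided inverse to $\mu_n\colon\CB_n\otimes\CM_0\to\CM_n$ --- without this strong grading over all of $\ZZ$ the statement would fail for general graded modules, so your emphasis on it is exactly right.
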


Let $\CL$ be a line bundle on $X$, and $\sigma$ an automorphism of $X$. $\CL$ is called {\em $\sigma$-ample} if for sufficiently large $m$, \[H^q(X,\CL_\sigma^{\otimes m})=0\] for any $q>0$. Observe that when $\sigma$ is the identity, $\sigma$-ampleness is the usual ampleness.

We are ready to state the following key fact:
\begin{theorem}[Theorem 1.3 of \cite{AV}]\label{thm:ncserre}
Let $\sigma$ be an automorphism of $X$ and $\CL$ be a $\sigma$-ample invertible sheaf on $X$. Let $B_\sigma$ be a noncommutative graded ring as follows:
\[ B_\sigma:=\bigoplus_{m \geq 0}H^0(X,\CL_\sigma^{\otimes m})\] where the ring structure is given by
\[a\cdot_\sigma b:=a\cdot (\sigma^n)^* b\] for $a\in B_n, b\in B_m$.
Then there is an equivalence of abelian categories
\[Coh(X) \simeq {\rm qgr-}B_\sigma:= {\rm gr-}B_\sigma /{\rm tors-}B_\sigma.\]
\end{theorem}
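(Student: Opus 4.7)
The plan is to adapt the classical Serre equivalence $Coh(\Proj A)\simeq \text{qgr-}A$ to the noncommutative setting. The previous proposition already identifies quasi-coherent sheaves on $X$ with graded left $\CB$-modules via $\CF\mapsto \CB\otimes \CF$, so the remaining task is to pass from graded $\CB$-modules (objects in sheaves) to graded $B_\sigma$-modules (objects in abelian groups) by taking global sections, and to show this passage becomes an equivalence after modding out the torsion subcategory.

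First I would construct the functor $\Coh(X)\to \text{qgr-}B_\sigma$ by
\[
\CF\;\longmapsto\;\Gamma_*(\CF):=\bigoplus_{m\ge 0}H^0\!\left(X,\CL_\sigma^{\otimes m}\otimes \CF\right).
\]
The left $\CB$-module structure on $\CB\otimes\CF$ induces on $\Gamma_*(\CF)$ the structure of a graded left $B_\sigma$-module where the twist $a\cdot_\sigma b=a\cdot(\sigma^n)^*b$ appears naturally: the multiplication $\CL_\sigma^{\otimes n}\otimes\CL_\sigma^{\otimes m}\to \CL_\sigma^{\otimes n+m}$ carries a right $\CO(\sigma^n U)$-action on its first factor, which is exactly the source of the twist by $(\sigma^n)^*$ on the second factor when one takes sections.

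Second, I would construct a quasi-inverse $\text{gr-}B_\sigma\to \text{QCoh}(X)$, analogous to the commutative $M\mapsto \widetilde{M}$. The cleanest way is to sheafify $M$ as a graded $\CB$-module $\widetilde{M}$ (locally, on an affine open $U$ where $\CL$ is trivializable, by a twisted localization that undoes the $\sigma$-twist using the local trivialization), and then recover a quasi-coherent sheaf via the degree-zero piece of the previous proposition. Bounded (hence torsion) $B_\sigma$-modules sheafify to zero, so this descends to $\text{qgr-}B_\sigma$.

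Third, I would verify the two compositions are the identity. The nontrivial direction is $\widetilde{\Gamma_*(\CF)}\cong \CF$ for a coherent $\CF$. The input is $\sigma$-ampleness: for $m\gg 0$ the sheaf $\CL_\sigma^{\otimes m}\otimes\CF$ is globally generated with vanishing higher cohomology, so the evaluation map
\[
H^0\!\left(X,\CL_\sigma^{\otimes m}\otimes\CF\right)\otimes_\Lambda \CO_X \twoheadrightarrow \CL_\sigma^{\otimes m}\otimes\CF
\]
is surjective, and a two-term resolution by sums of $\CL_\sigma^{\otimes -m}$ reduces the claim to the identification $\Gamma_*(\CL_\sigma^{\otimes -m})\cong B_\sigma(-m)$ in sufficiently high degree. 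The opposite composition agrees with the identity modulo torsion by the same vanishing input.

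The main obstacle is bookkeeping the $\sigma$-twists. In the commutative proof one freely reorders tensor factors, but here every such move introduces a $\sigma$-pullback, and all canonical maps must be checked to remain equivariant for the twisted multiplication of $B_\sigma$ rather than just $\CB$. A secondary subtlety is that $\sigma$-ampleness only asserts $H^{>0}(X,\CL_\sigma^{\otimes m})=0$ and not the corresponding vanishing with arbitrary coherent twists; upgrading it to the full Serre-vanishing statement needed above requires an induction on the dimension of the support of $\CF$, combined with the projection formula in its bimodule-twisted form.
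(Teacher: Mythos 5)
This theorem is not proved in the paper: it is imported verbatim as Theorem 1.3 of \cite{AV}, so the only honest comparison is with Artin--van den Bergh's original argument. Your outline reproduces the skeleton of that argument correctly --- define $\Gamma_*$ using the bimodule twists, construct a sheafification quasi-inverse that kills bounded (torsion) modules, and use $\sigma$-ampleness to obtain global generation and cohomology vanishing of $\CB_m\otimes\CF$ for $m\gg 0$, which is what makes the unit and counit isomorphisms modulo torsion. The twist bookkeeping you flag is real but routine.

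The step that actually fails is your proposed upgrade of the vanishing hypothesis. From the definition of $\sigma$-ampleness recorded in this paper --- $H^q(X,\CL_\sigma^{\otimes m})=0$ for $q>0$ and $m\gg 0$, with no coherent twist --- one cannot deduce Serre vanishing and global generation for $\CB_m\otimes\CF$ by induction on the support of $\CF$ together with a projection formula. Concretely, take $\sigma=\mathrm{id}$ and $\CL$ a non-torsion line bundle of degree zero on an elliptic curve: then $H^q(X,\CL^{\otimes m})=0$ for all $q>0$ and $m\geq 1$, so the stated hypothesis holds, yet $B_\sigma=\bigoplus_{m\geq 0}H^0(X,\CL^{\otimes m})$ is the base field concentrated in degree $0$, ${\rm qgr}$-$B_\sigma$ is trivial, and the asserted equivalence fails. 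The usual induction-on-support proof of Serre vanishing already presupposes a surjection onto $\CF$ from a sum of negative twists, i.e.\ precisely the global generation of $\CB_m\otimes\CF$ that is in question, so the argument is circular. The resolution is that the definition of $\sigma$-ampleness actually used in \cite{AV} is the stronger one: for \emph{every} coherent $\CF$, the sheaf $\CB_m\otimes\CF$ is generated by global sections and has vanishing higher cohomology for $m\gg 0$. With that definition your third step holds by hypothesis rather than by the claimed induction, and the rest of your outline is sound.
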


\begin{definition}[\cite{AV}]
The ring $B_\sigma$ is called a {\em twisted homogeneous coordinate ring} of $X$.
\end{definition}
The theorem enables us to think of coherent sheaves on $X$ as $B_\sigma$-modules. Given a coherent sheaf $\CE$, the corresponding $B_\sigma$-module is
\[ \Gamma_*(\CE):=\bigoplus_{n\in \ZZ} H^0(X,\CB_n \otimes \CE).\]
It is clear that
\[ \Gamma_*(\CO_X)=B_\sigma,\; \Gamma_*(\CO_X(1))=B_\sigma(1),\]
by considering the following isomorphisms
\begin{align*} 
\CB_n \otimes \CO&= \CO(1) \otimes \sigma^*\CO(1)\otimes \cdots\otimes (\sigma^{n-1})^* \CO(1) \otimes (\sigma^n)^*\CO \\
& \cong \CO(1) \otimes \sigma^*\CO(1)\otimes \cdots\otimes (\sigma^{n-1})^* \CO(1)
\end{align*}
and
\[ \CB_{n}\otimes \CO(1)=\CO(1)\otimes \sigma^*\CO(1)\otimes\cdots\otimes (\sigma^{n-1})^*\CO(1) \otimes (\sigma^n)^*\CO(1)=\CB_{n+1}.\]

\section{$\AI$-categories}
\subsection{Basic definitions}
\begin{definition}

The {\em Novikov field} is 
\[\Lambda:=\Big\{\sum_{i \geq 0} a_i T^{\lambda_i} \mid a_i \in \CC, \lambda_i \in \RR, \lambda_i \to \infty \;\mathrm{as} \; i \to \infty \Big\}.\]

\end{definition}

A filtration $F^\bullet\Lambda$ of $\Lambda$ is given by \[F^\lambda \Lambda:=\Big\{\sum_{i\geq 0}a_i T^{\lambda_i} \mid \lambda_i \geq \lambda {\rm \;for\; all\; }i \Big\} \subset \Lambda.\]

We write \[F^+\Lambda:=\Big\{\sum_{i\geq 0}a_i T^{\lambda_i} \mid \lambda_i > 0 {\rm \;for\; all\; }i \Big\}.\]

The {\em Novikov ring} $\Lambda_0$ is defined by $F^0 \Lambda$.
\begin{definition}
A {\em filtered $\AI$-category} $\calC$ over $\Lambda$ consists of a class of objects $Ob(\calC)$ and the set of morphisms $hom_{\calC}(A,B)$ for each pair of objects $A$ and $B$(from now on we just write $hom(A,B)$ for $hom_\calC(A,B)$ if there is no confusion), with the following conditions:

\begin{enumerate}
 \item $hom(A,B)$ is a filtered $\ZZ$-graded $\Lambda$-vector space for any $A,B\in Ob(\calC)$,
 \item for $k \geq 0$ there are multilinear maps of degree 1
  \[m_k: hom(A_0,A_1)[1] \otimes hom(A_1,A_2)[1] \otimes \cdots \otimes hom(A_{k-1},A_k)[1] \to hom(A_0,A_k)[1]\]
  such that they preserve the filtration and satisfy the {\em $\AI$-relation}
  \[\sum_{k_1+k_2=n+1}\sum_{i=1}^{k_1}(-1)^\epsilon m_{k_1}(x_1,...,x_{i-1},m_{k_2}(x_i,...,x_{i+k_2-1}),x_{i+k_2},...,x_n)=0\] where $\epsilon=\sum_{j=1}^{i-1}(|x_j|+1).$ 
   
\end{enumerate}

\end{definition}


\begin{definition}
For an object $A$ in an $\AI$-category, $e_A \in hom(A,A)$ is called a {\em unit} if it satisfies
 \begin{enumerate}
  \item $m_2(e_A,x)=x,\; m_2(y,e_A)=(-1)^{|y|}y$ for any $x \in hom(A,B)$, $y \in hom(B,A)$,
  \item $m_{k+1}(x_1,...,e_A,...,x_k)=0$ for any $k \neq 1$.
 \end{enumerate}
\end{definition}

\begin{definition}
An element $b \in F^+hom^1(A,A)$ is called a {\em weak bounding cochain} of $A$ if it is a solution of the weak Maurer-Cartan equation
\begin{equation}\label{eq:weakMC}
m(e^b):=m_0^A+m_1(b)+m_2(b,b)+ \cdots = PO(A,b)\cdot e_A\end{equation}
for some $PO(A,b) \in \Lambda.$ If such a solution exists, then $A$ is called {\em weakly unobstructed}. 
 If there exists a solution $b$ such that $PO(A,b)=0$, then $b$ is called a {\em bounding cochain} and $A$ is called {\em unobstructed}. $PO(A,b)$ is called the {\em Landau-Ginzburg superpotential} of $b$. \end{definition}
We denote $\mathcal{M}_{weak}(A)$ be the set of weak bounding cochains of $A$. Then $PO(A,\cdot)$ is a function on $\mathcal{M}_{weak}(A)$. We also define 
\[\CM_{weak}^\lambda(A):=\{b\in \CM_{weak}(A) \mid PO(A,b)=\lambda\}.\]

Given an $\AI$-category $\calC$, under the assumption $\CM^\lambda_{weak}(A)$ is nonempty for some objects, we define a new $\AI$-category $\calC_\lambda$ as
\[Ob(\calC_\lambda)=\bigcup_{A \in Ob(\calC)}\{A\} \times \CM_{weak}^\lambda(A),\]
\[hom_{\calC_\lambda}((A_1,b_1),(A_2,b_2))=hom_{\calC}(A_1,A_2)\]
with the following $\AI$-structure maps
\begin{align*}
m_k^{b_0,...,b_k}:&\;hom_{\calC_\lambda}((A_0,b_0),(A_1,b_1)) \otimes \cdots \otimes hom_{\calC_\lambda}((A_{k-1},b_{k-1}),((A_k,b_k)) \\
&\to hom_{\calC_\lambda}((A_0,b_0),(A_k,b_k)),\end{align*}
\[m_k^{b_0,...,b_k}(x_1,...,x_k):=\sum_{l_0,...,l_k}m_{k+l_0+\cdots+l_k}(b_0^{l_0},x_1,b_1^{l_1},...,b_{k-1}^{l_{k-1}},x_k,b_k^{l_k})\]
where $x_i \in hom_{\calC_\lambda}((A_i,b_i),(A_{i+1},b_{i+1})).$ $\AI$-relation is induced by the weak Maurer-Cartan equation (\ref{eq:weakMC}).

%
%
\begin{definition}
Let $\calC$ and $\calC'$ be $\AI$-categories. An {\em $\AI$-functor} between $\calC$ and $\calC'$ is a collection $\CF=\{\CF_i\}_{i \geq 0}$ consisting of

\begin{itemize}
 \item $\CF_0: Ob(\calC) \to Ob(\calC'),$
 \item $\CF_k: hom_{\calC}(A_0,A_1)[1] \otimes \cdots \otimes hom_{\calC}(A_{k-1},A_k)[1] \to hom_{\calC'}(\CF_0(A_0),\CF_0(A_k))[1]$ of degree $0$
\end{itemize}
which are subject to the following condition:
\begin{eqnarray*}
& \sum\limits_{i,j} (-1)^{|x_1|'+\cdots+|x_{i-1}|'} \CF_{i-j+k}(x_1,...,x_{i-1},m^\calC_{j-i+1}(x_i,...,x_j),x_{j+1},...,x_k)\\
=& \sum\limits_l m^{\calC'}_{l+1}(\CF_{i_1-1}(x_1,...,x_{i_1}),\CF_{i_2-i_1}(x_{i_1+1},...,x_{i_2}),...,\CF_{k-i_l}(x_{i_l+1},...,x_{k})).
\end{eqnarray*}

\end{definition}


Finally, we recall the definition of an {\em $\AI$-category over a noncommutative base} from \cite{CHLau2}. Let $\calC$ be an $\AI$-category over $\Lambda_0$ and $K$ be a noncommutative $\Lambda_0$-algebra. Then we define $\widetilde{\calC}:=K \hat{\otimes}_{\Lambda_0} \calC$, with the following $\AI$-operations
\[ m_k^{\widetilde{\calC}}(f_1 p_1,\cdots,f_k p_k):= f_k \cdots f_1\cdot m_k^\calC(p_1,\cdots,p_k)\]
where $f_1,\cdots,f_k$ are elements of $K$. It is clear that those operations satisfy the $\AI$-relation(see Lemma 2.7 of \cite{CHLau2}).

\section{Noncommutative mirror functor}\label{sec:locmir}
\subsection{Lagrangian Floer theory}
We briefly recall Lagrangian Floer theory, especially for Calabi-Yau manifolds. For more details, see \cite{FOOO} and also \cite{L}. 

Let $(M,\omega)$ be a symplectic manifold with a holomorphic volume form $\Omega$. Given two Lagrangians $L_0$ and $L_1$ which intersect transversely(if not, choose a Hamiltonian diffeomorphism $\psi$ so that $L_0$ and $\psi(L_1)$ are transverse), the morphism space $CF(L_0,L_1)$ is defined by $\bigoplus_{p\in L_0 \cap L_1}\Lambda\cdot p$. 
The underlying space does not depend on flat line bundles in our setting(if one wants to equip with higher rank vector bundles, then the definition must be modified).

\begin{definition}
An oriented Lagrangian submanifold in the CY manifold $(M,\omega,\Omega)$ is {\em graded} if there is a function $\theta_L:L \to \RR$ such that \[\displaystyle\frac{\Omega(X_1(p)\wedge \cdots \wedge X_n(p))}{|\Omega(X_1(p) \wedge \cdots \wedge X_n(p))|}=e^{i\theta_L(p)}\] for any positively oriented wedge of vector fields $X_1 \wedge \cdots \wedge X_n$ of $L$. $\theta_L$ is called the {\em phase function} of $L$. If $\theta_L$ is constant, then $L$ is called {\em special Lagrangian}. 
\end{definition}
$CF(L_0,L_1)$ is $\ZZ/2$-graded by {\em Maslov indices} of intersection points, but if $L_0$ and $L_1$ are graded, then we can equip $CF(L_0,L_1)$ with a $\ZZ$-grading. A main geometric feature of graded Lagrangian submanifolds is that they do not bound nontrivial holomorphic discs.


$\AI$-operations are constructed as follows. Let $p_i \in CF(L_{i-1},L_i)$ for $i=1,\cdots,k$ and $q \in CF(L_0,L_k).$ Define a moduli space $\CM(p_1,...,p_k;q)$ of $J$-holomorphic polygons whose domains are $D^2$ minus $k+1$ boundary points cyclically ordered by $z_1,...,z_k,z_0$, arcs between $p_i$ and $p_{i+1}$ are mapped inside $L_i$(and inside $L_k$ between $p_k$ and $q$), and the images near those punctures are asymptotically $p_1,...,p_k,q$, respectively. Let $\CM(p_1,...,p_k;q;\beta)$ be a subset of $\CM(p_1,...,p_k;q)$ which consists of holomorphic polygons of homotopy class $\beta$. Then the dimension of the moduli space is given by
\[\dim \CM(p_1,...,p_k;q;\beta)=k-2+\ind(\beta).\]
Fix a trivialization of $u^*TM$ so that we get paths of Lagrangian subspaces $l_0,l_1,...,l_k$ on $L_0,L_1,...,L_k$ respectively. Then we start from $T_{p_1}L_0$, at corners $p_i$ concatenate negative definite paths, move along $l_i$ until we arrive at $T_q L_k.$ At $q$ concatenate the positive definite path and move along $l_0$ to arrive at $T_{p_1} L_0$ again. The {\em index} of $u$ is defined by the winding number of the loop described above, and it depends only on the homotopy class of $u$. For $j=1,\cdots,k$, let $\nabla_j$ be the flat connection on $L_j$, and the tuple $(L_j,\nabla_j)$ be an object of the Fukaya category. 
We define 
\begin{align*}
m_k: &\;CF((L_0,\nabla_0),(L_1,\nabla_1)) \otimes \cdots \otimes CF((L_{k-1},\nabla_{k-1}),(L_k,\nabla_k)) \\
& \to CF((L_0,\nabla_0),(L_k,\nabla_k))\end{align*}
 by
\[m_k(p_1,...,p_k):=\sum_{\stackrel{p_i\in L_{i-1}\cap L_i,q\in L_0\cap L_k}{\ind[u]=2-k}}\# (\CM(p_1,...,p_k;q;[u]))T^{\omega(u)}\cdot Pal(\partial u)\cdot q\]
where $Pal(\partial u)\in U(1)$ means the holonomy around $\partial u$. We remark again that if we equip Lagrangians with higher rank local systems then the definition of $\AI$-operations should be more careful, but for rank 1 local systems the above definition is precise enough.

\begin{definition}
The {\em Fukaya category} of a symplectic manifold $(M,\omega)$ is an $\AI$-category $Fu(M,\omega)$ over $\Lambda$ whose objects are Lagrangian submanifolds equipped with flat unitary line bundles, with morphisms and their $\AI$-operations defined as above.
\end{definition}

\begin{remark}
In this paper we only consider the Fukaya category of the 2-torus, in which every object is unobstructed, so we denote the Fukaya category of $T^2$ by $Fu(T^2)$, instead of specifying the potential value $0$ as $Fu_0(T^2)$.
\end{remark}



\subsection{Noncommutative homological mirror functor of the elliptic curve}\label{sec:ncfunctor}
Given the elliptic curve $T^2=\CC/(\ZZ\oplus e^{2\pi i/3}\ZZ)$ with the natural $\ZZ/3$-action and for $t\in (-1,1)$, take a Lagrangian ${\LL}^{vert}_{t,\lambda}$ with nontrivial spin structure, and with a flat line bundle of holonomy $\lambda$(the meaning of $\lambda=e^{2\pi is} \in U(1)$ is that if a segment on it has reverse orientation with length $l$, then the parallel transport along the segment is given by multiplying $e^{2\pi is\cdot l/l_0}$ where $l_0$ is the total length of ${\LL}^{vert}_{t}$), which is the vertical line in the universal cover $\CC$ pointing upward through $(\frac{-t+1}{4},0)$. Then apply $\ZZ/3$-action $\tau$ to have two more Lagrangians and take the union 
\[\LL_{t,\lambda}:={\LL}^{vert}_{t,\lambda} \cup \tau({\LL}^{vert}_{t,\lambda}) \cup \tau^2({\LL}^{vert}_{t,\lambda}).\] 
(The reference Lagrangian of the previous section is the case $t=0$ and $\lambda=1$.) Define 
\[b:= x(X_1+X_2+X_3)+y(Y_1+Y_2+Y_3)+z(Z_1+Z_2+Z_3)\]
where $X_i$, $Y_i$ and $Z_i$($i=1,2,3$) are odd degree immersed sectors of $\ltl$(depicted in Figure \ref{immsec}).
\begin{figure}
\includegraphics[height=2in]{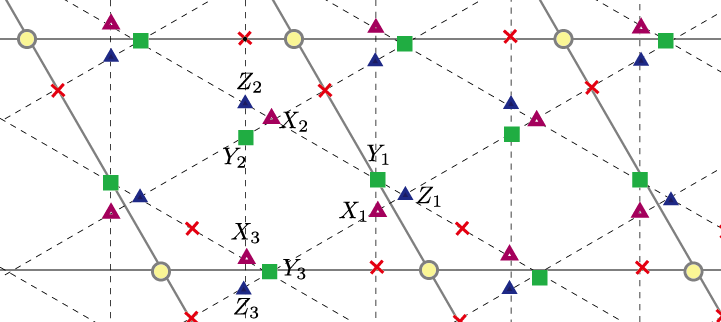}
\caption{Immersed generators of $\LL_{t,\lambda}$.}
\label{immsec}
\end{figure}

$x$, $y$, $z$ are formal variables of even degrees.
For $(t,\lambda)=(0,-1)$ we implicitly impose the commutativity relation on the free algebra $\Lambda\langle x,y,z\rangle$ to make $m_0^b$ a multiple of the unit. If we consider $m_0^b$ for general $\ltl$ without further relation on $\Lambda\langle x,y,z\rangle$, then it is not a mutiple of the unit, because there occur nonzero contributions of even degree immersed sectors. Therefore we need to add such contributions as relations on the generators. In \cite{CHLau2} the authors computed $m_0^b$ as follows, and so classified the relations which we need to quotient out. Let
\[ \bar{X}:=\bar{X}_1+\bar{X}_2+\bar{X}_3, \;\bar{Y}:=\bar{Y}_1+\bar{Y}_2+\bar{Y}_3,\; \bar{Z}:=\bar{Z}_1+\bar{Z}_2+\bar{Z}_3.\]

\begin{prop}[\cite{CHLau2}]
$m_0^b=W^\ltl \cdot 1_\ltl + h_{\bar{X}}\bar{X}+ h_{\bar{Y}}\bar{Y}+ h_{\bar{Z}}\bar{Z},$ where
\begin{align*} W^\ltl 
=&-\frac{\pi i q_0^{(3t+1)^2}\lambda^{\frac{1+3t}{2}}}{6}\Big(a'(u)(xyz+zxy+yzx)+b'(u)(zyx+xzy+yxz)\end{align*}
\[ +c'(u)(x^3+y^3+z^3)\Big),\]
\[ a(u):=q_0^{-(3t+1)^2}\lambda^{-\frac{1+3t}{2}}\atl,
\; b(u):=q_0^{-(3t+1)^2}\lambda^{-\frac{1+3t}{2}}\btl,\]
\[c(u):=q_0^{-(3t+1)^2}\lambda^{-\frac{1+3t}{2}}c_{t,\lambda}\]
where 
\[ \atl=\lambda^{\frac{1+3t}{2}}\big(\sum_{k\in \ZZ}\lambda^{3k}q_0^{(6k+1+3t)^2}\big),\;\btl=\lambda^{\frac{1+3t}{2}}\big(\sum_{k\in \ZZ}\lambda^{3k+2}q_0^{(6k+5+3t)^2}\big),\]
\[\ctl=\lambda^{\frac{1+3t}{2}}\big(\sum_{k\in \ZZ}\lambda^{3k+1}q_0^{(6k+3+3t)^2)}\big)\]
so that $a(u)$, $b(u)$ and $c(u)$ are holomorphic functions in $u$.

Weak Maurer-Cartan relations for general $(t,\lambda)$ are given by
\[ h_{\bar{X}}=h_{\bar{Y}}=h_{\bar{Z}}=0\] where
\[ h_{\bar{X}}=\atl yz+\btl zy+\ctl x^2, h_{\bar{Y}}=\atl zx+\btl xz+\ctl y^2, h_{\bar{Z}}=\atl xy+\btl yx+\ctl z^2. \]
The Landau-Ginzburg potential $W^{\ltl}$ is a central element of a Sklyanin algebra $\CAtl:=Sky(\atl,\btl,\Ctl)$.
\end{prop}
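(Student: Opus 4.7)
The plan is to compute $m_0^b = \sum_{k \geq 0} m_k(b,\ldots,b)$ by direct enumeration of the $J$-holomorphic polygons bounded by $\ltl$ whose corners are prescribed by the odd-degree immersed generators $X_i, Y_i, Z_i$ appearing in $b$. Since each of the three components of $\ltl$ is a straight line in the flat torus $T^2 = \CC/(\ZZ \oplus e^{2\pi i/3}\ZZ)$, lifting to the universal cover reduces the Floer problem to a counting problem for convex polygonal regions in $\CC$ whose edges follow three families of parallel lines meeting at triple intersections, and whose symplectic area determines the $T$-exponent while the monodromy of the flat line bundle determines the $\lambda$-exponent.

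First I would classify the contributing polygons by the dimension formula $\dim \CM(p_1,\ldots,p_k;q;[u]) = k - 2 + \ind(\beta) = 0$, which forces a finite list of combinatorial types for each possible output $q$. Polygons with output one of $\bar X, \bar Y, \bar Z$ are triangles with two odd corners among the $X,Y,Z$ generators, and summing over the $\ZZ \oplus e^{2\pi i/3}\ZZ$-translates of each triangle class produces theta-like series which, up to the overall normalization prefactor, are precisely $\atl, \btl, \ctl$, yielding the three Maurer-Cartan relations $\atl yz + \btl zy + \ctl x^2 = 0$ and cyclic permutations. Polygons with output the unit $1_\ltl$ split into two families: hexagons contributing the six cyclic monomials $xyz, zxy, yzx, zyx, xzy, yxz$ and nonagons that degenerate at the triple point contributing $x^3, y^3, z^3$; their area-weighted sums are obtained by differentiating the same theta series in the deformation parameter $u$, producing the coefficients $a'(u), b'(u), c'(u)$. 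Careful tracking of spin-structure signs, of the holonomy factors $\lambda^{3k+\ast}$, and of the noncommutative convention $m_k(f_1 p_1,\ldots,f_k p_k) = f_k \cdots f_1 \cdot m_k(p_1,\ldots,p_k)$ then reproduces the stated formula; this last convention is precisely what distinguishes the three hexagons in the $a'$-block from those in the $b'$-block.

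To establish centrality of $W^\ltl$ in the Sklyanin algebra $\CAtl = Sky(\atl,\btl,\ctl)$, I would compute the commutators $[x, W^\ltl]$, $[y, W^\ltl]$, $[z, W^\ltl]$ in the free algebra $\Lambda\langle x,y,z\rangle$ and reduce modulo the three defining relations; the reduction collapses to a scalar multiple of a cubic in $\atl, \btl, \ctl$ which vanishes on the elliptic cubic that the parameters trace out. Alternatively, one can invoke the general structural result of Artin-Tate-Van den Bergh that a Sklyanin algebra in generic parameters has a one-dimensional center in degree three, and match a single monomial coefficient to identify the central generator with $W^\ltl$. The main obstacle is the bookkeeping in the polygon enumeration: ensuring that every hexagon, nonagon, and triangle class is counted once with the correct noncommutative ordering of $x,y,z$ factors requires a case-by-case analysis of how corners appear along the three line families in the universal cover, and the six-term split that distinguishes the noncommutative potential from the commutative one at $(t,\lambda)=(0,-1)$ only emerges after the signs, holonomies, and triple-point identifications are all consistently matched.
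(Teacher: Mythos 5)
This proposition is not proved in the paper at all: it is recalled verbatim from \cite{CHLau2}, so there is no internal proof to measure your attempt against. Compared with the argument in that reference, your outline follows the same overall strategy: restrict to rigid polygons via the index formula, sort the contributions to $m_0^b$ by output generator, read off the quadratic Maurer--Cartan expressions $h_{\bar{X}},h_{\bar{Y}},h_{\bar{Z}}$ from the polygons with an even immersed output (whose area-and-holonomy-weighted counts assemble into the theta-type series $\atl,\btl,\ctl$), read off $W^\ltl$ from the polygons with output the unit, and use the order-reversing convention $m_k(f_1p_1,\dots,f_kp_k)=f_k\cdots f_1\,m_k(p_1,\dots,p_k)$ over the noncommutative base to explain why the six mixed cubic monomials split into the $a'(u)$- and $b'(u)$-blocks. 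This is the right skeleton.

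Two steps in your sketch are asserted where the real work lies. First, the claim that the cubic coefficients are \emph{literally} the $u$-derivatives $a'(u),b'(u),c'(u)$ of the quadratic coefficients is the crux of the formula; saying the cubic counts are ``obtained by differentiating the same theta series'' names the answer without giving the geometric reason, which in \cite{CHLau2} comes from an explicit comparison of the areas of the two families of polygons as functions of the deformation parameter $u$. Second, your fallback route to centrality --- invoke the one-dimensionality of the degree-three center of a generic Sklyanin algebra and ``match a single monomial coefficient'' --- does not suffice: $W^\ltl$ lies in a three-parameter family of cubics of the given shape, and identifying it with the central element requires checking that the full ratio $(a'(u):b'(u):c'(u))$ equals the ratio of the coefficients (which are themselves cubic expressions in $\atl,\btl,\ctl$) of the known central element of $\CAtl$. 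That is a genuine identity between the theta functions and their derivatives, equivalent to the statement that $u\mapsto(a(u):b(u):c(u))$ parametrizes the associated cubic curve; likewise, in the direct commutator computation the vanishing condition involves $a',b',c'$ jointly with $\atl,\btl,\ctl$, not merely ``a cubic in $\atl,\btl,\ctl$.'' With those two points supplied, the outline matches the proof in the cited source.
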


We define a correspondence between objects
\begin{align*}
 \CF_0: Fu(T^2) & \to MF(\CAtl,W^\ltl),\\
L & \mapsto \CAtl \otimes_{\Lambda_0}CF(\ltl,L).
\end{align*}
It can extend to an $\AI$-functor $\{\CF_i\}_{i \geq 0}$ exactly same as the commutative case. Only one thing is different: we need to define $\AI$-operations over the noncommutative base $\CAtl$ since the reference $\ltl$ has its weak bounding cochains in it. Also we need to be careful because the Floer complex is $\ZZ$-graded, and the corresponding matrix factorization is also $\ZZ$-graded. Therefore we need to specify the twisting of each summand of the corresponding module. We refer readers to Section 6.2 of \cite{L} for the details.

\section{LG/CY correspondence}\label{sec:LGCY}
A graded ring $A=\bigoplus_{d\geq 0}A_d$ is a {\em Gorenstein algebra with parameter $a$} if it has finite injective dimension $n$ and $\RR Hom_A(\bk,A) \cong \bk(a)[-n]$ where $\bk=A_0$. Let $D^b({\rm qgr-}A):=D^b({\rm gr-}A)/D^b({\rm tors-}A)$, and $D^b_{sg}(A):=D^b({\rm gr-}A)/{\rm Perf-}A$.

\begin{lemma}[\cite{Or}]
Let $\mathcal{U}_{\geq i}$ be the triangulated subcategory of $D^b({\rm gr-}A_{\geq i})$ generated by ${\bk}(e)$ with $e \leq -i$, and $\mathcal{P}_{\geq i}$ be the triangulated subcategory of $D^b({\rm gr-}A_{\geq i})$ generated by $A(e)$ with $e \leq -i$.
Then for any $i\in \ZZ$ we have semiorthogonal decompositions
\[D^b({\rm gr-}A_{\geq i})=\langle \mathcal{D}_i, \mathcal{U}_{\geq i} \rangle = \langle \mathcal{P}_{\geq i},\mathcal{T}_i \rangle\]
where $\mathcal{D}_i$ is equivalent to $D^b({\rm qgr-}A)$ and $\mathcal{T}_i$ is equivalent to $D^{gr}_{sg}(A)$.
\end{lemma}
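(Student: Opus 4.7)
The plan is to follow Orlov's original strategy and produce the two semiorthogonal decompositions by exhibiting $\mathcal{P}_{\geq i}$ and $\mathcal{U}_{\geq i}$ as admissible subcategories of $D^b({\rm gr\text{-}}A_{\geq i})$ and identifying their orthogonal complements with the two target categories using the Gorenstein hypothesis. The basic bookkeeping observation is that $A(e)$ lies in ${\rm gr\text{-}}A_{\geq i}$ exactly when $e \leq -i$ (since $A$ is concentrated in nonnegative degrees), and likewise $\bk(e) \in {\rm gr\text{-}}A_{\geq i}$ iff $e \leq -i$; thus $\mathcal{P}_{\geq i}$ is generated by precisely the graded projective generators that survive in the truncation, while $\mathcal{U}_{\geq i}$ is generated by the corresponding simples.

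For the decomposition $D^b({\rm gr\text{-}}A_{\geq i}) = \langle \mathcal{P}_{\geq i}, \mathcal{T}_i \rangle$, I would take $\mathcal{T}_i$ to be the right orthogonal to $\mathcal{P}_{\geq i}$. Since the $A(e)$ are projective, $\mathcal{P}_{\geq i}$ coincides with the subcategory of perfect complexes in $D^b({\rm gr\text{-}}A_{\geq i})$. To identify the Verdier quotient with $D^{gr}_{sg}(A)$, I first note that any bounded complex of graded $A$-modules can be shifted in internal degree to land in ${\rm gr\text{-}}A_{\geq i}$, so that the inclusion ${\rm gr\text{-}}A_{\geq i} \hookrightarrow {\rm gr\text{-}}A$ becomes an equivalence after inverting perfect complexes, and then I verify that the natural functor $\mathcal{T}_i \to D^b({\rm gr\text{-}}A_{\geq i})/\mathcal{P}_{\geq i}$ is an equivalence by the standard characterization of admissible subcategories. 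The Gorenstein condition enters here to guarantee that the perfect complexes are genuinely generated by the $A(e)$ with $e \leq -i$ (via finite injective resolutions).

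For the other decomposition $\langle \mathcal{D}_i, \mathcal{U}_{\geq i}\rangle$, I set $\mathcal{D}_i$ to be the left orthogonal of $\mathcal{U}_{\geq i}$. The simples $\bk(e)$ generate the torsion subcategory of ${\rm gr\text{-}}A_{\geq i}$, so the quotient by $\mathcal{U}_{\geq i}$ should recover ${\rm qgr\text{-}}A$. The Gorenstein duality $\mathbb{R}{\rm Hom}_A(\bk,A) \cong \bk(a)[-n]$ produces the finite resolution needed to compute ${\rm Hom}^\bullet(\bk(e),-)$ on objects of $\mathcal{D}_i$ and thereby check both the semiorthogonality and the admissibility. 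Combining this with the fact that torsion modules coincide with the subcategory generated by shifts of $\bk$ then gives $\mathcal{D}_i \simeq D^b({\rm qgr\text{-}}A)$ via the composition $\mathcal{D}_i \hookrightarrow D^b({\rm gr\text{-}}A_{\geq i}) \to D^b({\rm gr\text{-}}A) \to D^b({\rm qgr\text{-}}A)$.

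The main obstacle I expect is verifying admissibility of the two candidate subcategories, equivalently the existence of both adjoints to their inclusions; this is where the Gorenstein hypothesis does real work, essentially by converting statements about projective generation (for $\mathcal{P}_{\geq i}$) into statements about injective resolutions of $\bk$ (for $\mathcal{U}_{\geq i}$) and vice versa. The parameter $a$ and the shift $[-n]$ in the Gorenstein condition are what keep track of how the grading labels transform between the two decompositions, and I would expect them to show up explicitly in the comparison between $\mathcal{D}_i$ and $\mathcal{T}_i$ obtained by combining the two decompositions — this comparison is ultimately the source of the LG/CY correspondence functors $\mathcal{G}_i^{t,\lambda}$ used later in the main theorem.
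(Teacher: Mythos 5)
The paper offers no proof of this lemma: it is imported verbatim from Orlov \cite{Or}, so there is no internal argument to compare with, and your sketch can only be measured against Orlov's. Your general plan (exhibit $\mathcal{U}_{\geq i}$ and $\mathcal{P}_{\geq i}$ as admissible and identify the complementary pieces with $D^b({\rm qgr-}A)$ and $D^{gr}_{sg}(A)$), and your bookkeeping of which twists $A(e)$, $\bk(e)$ survive in ${\rm gr-}A_{\geq i}$, are correct. However, you have placed both orthogonals on the wrong side, and with your choices both complementary categories are actually zero. Since $A(e)$ is projective, $\Hom(A(e),T[j])\cong H^j(T)_{-e}$; requiring this to vanish for all $e\leq -i$ and all $j$ forces every cohomology module of $T$ to vanish in internal degrees $\geq i$, and since objects of $D^b({\rm gr-}A_{\geq i})$ have cohomology only in those degrees, the right orthogonal of $\mathcal{P}_{\geq i}$ is $0$. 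Dually, every nonzero $X$ in $D^b({\rm gr-}A_{\geq i})$ admits a nonzero map to some $\bk(-d)[-j]$ with $d\geq i$ (truncate to the top cohomology and project onto a simple quotient; all minimal generators lie in degrees $\geq i$), so the left orthogonal of $\mathcal{U}_{\geq i}$ is also $0$. With the convention forced by the statement (morphisms vanish from the right factor to the left one), one needs $\mathcal{T}_i={}^{\perp}\mathcal{P}_{\geq i}$ and $\mathcal{D}_i=\mathcal{U}_{\geq i}^{\perp}$, exactly the opposite of what you wrote; as stated, your decompositions degenerate, so this is not a cosmetic convention issue.

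Two secondary problems. The identification $D^b({\rm gr-}A_{\geq i})/\mathcal{P}_{\geq i}\simeq D^{gr}_{sg}(A)$ cannot be justified by ``shifting in internal degree'': the twist $M\mapsto M(j)$ is a nontrivial autoequivalence of $D^{gr}_{sg}(A)$, not an isomorphism of objects. The correct mechanism is the syzygy trick: since $A$ is connected, the $k$-th syzygy of a module generated in degrees $\geq d$ is generated in degrees $\geq d+k$, and $M\cong\Omega^{k}M[k]$ in $D^{gr}_{sg}(A)$, which gives essential surjectivity of $\mathcal{T}_i\to D^{gr}_{sg}(A)$. Finally, the Gorenstein hypothesis is not what makes perfect complexes ``generated by the $A(e)$ with $e\leq -i$'' --- that follows from minimal free resolutions over a connected graded ring with no hypothesis on $A$; its actual role is to ensure that $\RR\Hom_A(\bk,-)$, and hence the adjoints and the functor $\RR\omega_i$, preserve boundedness, and to produce the exceptional blocks and the parameter $a$ appearing in the theorem that follows the lemma.
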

We have an explicit equivalence from $D^b({\rm qgr-}A)$ to $\mathcal{D}_i$ by 
\[\RR \omega_i: D^b({\rm qgr-}A) \to D^b({\rm gr-}A_{\geq i}),\]
\[\RR \omega_i(M):=\bigoplus_{k=i}^\infty \RR\Hom_{D^b({\rm qgr-}A)}(\pi A, M(k))\]
where $\pi: D^b({\rm gr-}A) \to D^b({\rm qgr-}A)$ is the projection.

\begin{theorem}[\cite{Or}]
Let $A$ be a Gorenstein algebra with parameter $a$. Then $D^{gr}_{sg}(A)$ and $D^b({\rm qgr-}A)$ are related as following:
 \begin{enumerate}
  \item if $a>0$, for each $i \in \ZZ$ there are fully faithful functors $\Phi_i : D^{gr}_{sg}(A) \to D^b({\rm qgr-}A)$ and semiorthogonal decomposition
   \[D^b({\rm qgr-}A)=\langle \pi A(-i-a+1),...,\pi A(-i),\Phi_i D^{gr}_{sg}(A)\rangle\] where $\pi: D^b({\rm gr-}A) \to D^b({\rm qgr-}A)$ is the natural projection and 
   \[\Phi_i:D^{gr}_{sg}(A) \simeq \mathcal{T}_i \hookrightarrow D^b({\rm gr-}A) \stackrel{\pi}{\rightarrow} D^b({\rm qgr-}A).\]
  \item if $a<0$, for each $i \in \ZZ$ there are fully faithful functors $\CG_i: D^b({\rm qgr-}A) \to D^{gr}_{sg}(A)$ and semiorthogonal decomposition 
   \[D^{gr}_{sg}(A)=\langle q {\bf k}(-i),...,q {\bf k}(-i+a+1),\CG_i D^b({\rm qgr-}A)\rangle\] where $q: D^b({\rm gr-}A) \to D^{gr}_{sg}(A)$ is the projection and 
   \[\CG_i: D^b({\rm qgr-}A) \simeq \mathcal{D}_{i-a} \hookrightarrow D^b({\rm gr-}A) \stackrel{q}{\rightarrow} D^{gr}_{sg}(A).\]
  \item if $a=0$, $D^{gr}_{sg}(A)$ and $D^b({\rm qgr-}A)$ are equivalent via $\Phi_i$ and $\CG_i$.
 \end{enumerate}
\end{theorem}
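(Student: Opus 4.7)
The plan is to derive the theorem from the two semiorthogonal decompositions of $D^b({\rm gr-}A_{\geq i})$ provided by the previous lemma, comparing them via mutations whose defect is measured by the Gorenstein parameter $a$. First I would record how the two projection functors behave on these decompositions: $\pi$ kills torsion modules, so it annihilates $\mathcal{U}_{\geq i}$ and restricts to an equivalence $\mathcal{D}_i \stackrel{\sim}{\to} D^b({\rm qgr-}A)$; symmetrically $q$ kills perfect complexes, so it annihilates $\mathcal{P}_{\geq i}$ and restricts to an equivalence $\mathcal{T}_i \stackrel{\sim}{\to} D^{gr}_{sg}(A)$. This immediately produces the candidate functors: $\Phi_i$ is the composite $\mathcal{T}_i \hookrightarrow D^b({\rm gr-}A_{\geq i}) \stackrel{\pi}{\to} D^b({\rm qgr-}A)$, and $\CG_i$ is $\mathcal{D}_{i-a} \hookrightarrow D^b({\rm gr-}A_{\geq i-a}) \stackrel{q}{\to} D^{gr}_{sg}(A)$.

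Next I would translate the Gorenstein hypothesis $\RR\Hom_A(\bk,A) \cong \bk(a)[-n]$ into a concrete comparison between $\mathcal{U}_{\geq i}$ and $\mathcal{P}_{\geq i}$. Local duality implies that each $\bk(e)$ admits a finite minimal graded free resolution whose top term is controlled by a single $A(e-a)$ placed in cohomological degree $-n$, while dually each $A(f)$ is resolved by $\bk$-translates in the opposite direction. Threading this through the ambient category yields, for $a>0$, an inclusion
\[
\mathcal{U}_{\geq i} \subset \langle A(-i-a+1),\ldots,A(-i),\mathcal{P}_{\geq i+1}\rangle \subset D^b({\rm gr-}A),
\]
so that exactly $a$ free translates are needed to bridge $\mathcal{U}_{\geq i}$ and $\mathcal{P}_{\geq i+1}$. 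Applying $\pi$ sends the $\mathcal{P}_{\geq i+1}$-part into the image of $\Phi_i$ and turns these $a$ leftover modules into the exceptional sequence $\pi A(-i-a+1),\ldots,\pi A(-i)$, yielding the semiorthogonal decomposition of case (1). The dual argument with the roles of $\bk$ and $A$ swapped gives, for $a<0$, the inclusion $\mathcal{P}_{\geq i} \subset \langle \bk(-i),\ldots,\bk(-i+a+1),\mathcal{U}_{\geq i+1}\rangle$, and projecting through $q$ produces case (2). For $a=0$ both inclusions collapse to equalities modulo shift, so $\Phi_i$ and $\CG_i$ become mutually quasi-inverse equivalences.

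The main obstacle I anticipate is proving full faithfulness of $\Phi_i$ (resp.\ $\CG_i$). Since the embedding $\mathcal{T}_i \hookrightarrow D^b({\rm gr-}A_{\geq i})$ is already fully faithful by the lemma, what must be checked is that $\pi$ does not collapse any morphism space inside $\mathcal{T}_i$; equivalently, that no nonzero object of $\mathcal{T}_i$ lies in $\ker \pi$ and no new morphism is manufactured by passing to ${\rm qgr-}A$. I would verify this by combining the semiorthogonality $\RR\Hom(\mathcal{P}_{\geq i},\mathcal{T}_i)=0$ with the Gorenstein-based vanishing of $\Ext^{*}_A(\bk(-f),A(-e))$ outside the range of $e-f$ dictated by $a$; these pairings pin down $\ker(\pi|_{\mathcal{T}_i})$ as the span of those exceptional generators which, for the chosen truncation index, are disjoint from $\mathcal{T}_i$. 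Once full faithfulness is established, the precise counts of exceptional objects in the three cases follow directly from the inclusion analysis of the previous step.
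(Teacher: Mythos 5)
First, a remark on the ground rules: the paper does not prove this statement at all --- it is quoted from Orlov \cite{Or} (Theorem 2.5 there) --- so there is no in-paper proof to compare against; I am measuring your sketch against Orlov's actual argument.

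Your architecture is the right one: the two semiorthogonal decompositions of $D^b({\rm gr\text{-}}A_{\geq i})$ from the preceding lemma, the functors $\Phi_i$ and $\CG_i$ defined as the indicated composites through $\pi$ and $q$, and the Gorenstein parameter $a$ counting the exceptional objects separating the two decompositions. But the central step is false. Your displayed inclusion $\mathcal{U}_{\geq i} \subset \langle A(-i-a+1),\ldots,A(-i),\mathcal{P}_{\geq i+1}\rangle$ has right-hand side equal to $\mathcal{P}_{\geq i}$ (the extra twists just fill in the generators $A(e)$ with $-i-a+1\leq e\leq -i$), so it asserts that every $\bk(e)$ is a perfect complex. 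That holds only when $A$ has finite global dimension, in which case $D^{gr}_{sg}(A)=0$ and the theorem is empty; in the case this paper needs ($A=\CBtl$, a cubic hypersurface algebra) $\bk$ has an infinite, eventually periodic free resolution and the inclusion fails. The dual assertion that ``each $A(f)$ is resolved by $\bk$-translates'' is likewise false, since $A(f)$ is not a torsion module.

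The correct mechanism is orthogonality, not resolution. The Gorenstein condition gives $\RR\Hom_A(\bk(e),A(f)) \cong \bk(a+f-e)[-n]$, which vanishes for all $e \leq -i$ precisely when $-i-a+1 \leq f \leq -i$; for $a>0$ these $a$ twists of $A$ are therefore the ones lying in $\mathcal{D}_i = \mathcal{U}_{\geq i}^{\perp}$, and dually for $a<0$ exactly the $|a|$ twists $\bk(-i),\ldots,\bk(-i+a+1)$ lie in $\mathcal{T}_i={}^{\perp}\mathcal{P}_{\geq i}$. One must then prove the genuinely nontrivial containment $\mathcal{T}_i \subset \mathcal{D}_i$ for $a \geq 0$ (respectively $\mathcal{D}_{i-a}\subset\mathcal{T}_i$ for $a\leq 0$) and refine the decompositions accordingly; this containment is where the real work of Orlov's proof lies and your sketch does not supply it. Once $\mathcal{T}_i\subset\mathcal{D}_i$ is established, full faithfulness of $\Phi_i$ is automatic because $\pi|_{\mathcal{D}_i}$ is already an equivalence onto $D^b({\rm qgr\text{-}}A)$, so the separate analysis of $\ker(\pi|_{\mathcal{T}_i})$ in your final paragraph is not needed --- and note that ``no nonzero object of $\mathcal{T}_i$ lies in $\ker\pi$'' is in any case not equivalent to full faithfulness of the restriction of a Verdier quotient functor.
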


\section{Main theorem}\label{sec:main}
Given two Lagrangians $L_{(1,0)}$ and $L_{(1,-3)}$, we compare objects and morphisms via functors $\CG^{t,\lambda}_i \circ \CF$ and $\locmir^{\LL_{t,\lambda}}\circ\CS_i$. Let $i=0$, and recall the notation $\CAtl=Sky(\atl,\btl,\ctl)$, $\CBtl=\CAtl/W^\ltl$. We recall an important result.
\begin{theorem}[\cite{ATV}]
The algebra $\CBtl$ is isomorphic to a twisted homogeneous coordinate ring $B_{\sigma}$ of the elliptic curve $X={\rm Proj}(\Lambda[x,y,z]/W^{\LL_{0,-1}})$ with some automorphism $\sigma:X\to X$. 
\end{theorem}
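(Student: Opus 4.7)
The plan is to deduce this as a direct application of the main construction of Artin-Tate-van den Bergh \cite{ATV}. For a nondegenerate 3-dimensional Sklyanin algebra $Sky(a,b,c)$, they produce a unique (up to scalar) central element $g$ of degree 3; the abelianization $\bar g$ cuts out an elliptic curve $E \subset \mathbb{P}^2$, and the quotient $Sky(a,b,c)/(g)$ is canonically isomorphic to the twisted homogeneous coordinate ring $B(E, \CO_E(1), \sigma)$ for an explicit translation automorphism $\sigma$ of $E$ determined by $(a:b:c)$. Applied to $\CAtl$ with the cubic $W^\ltl$, this is exactly the statement we want.

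First I would verify the nondegeneracy hypotheses of \cite{ATV} for $(\atl, \btl, \ctl)$. The theta-series formulas show that the lowest-order Novikov terms of these coefficients are nonzero, so the triple lies in the regular locus in $\mathbb{P}^2$ throughout the parameter range. Since the center of a generic cubic Sklyanin algebra is one-dimensional in degree 3, the central cubic $W^\ltl$ of \cite{CHLau2} must coincide up to scalar with the distinguished ATV cubic, and hence $\CBtl = \CAtl/(W^\ltl)$ is precisely the quotient appearing in their theorem.

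Next I would identify the underlying elliptic curve with $X$. Abelianizing $W^\ltl$ collapses the two cyclic triples $xyz+yzx+zxy$ and $xzy+zyx+yxz$ each to $3xyz$, so $\overline{W^\ltl}$ has Hesse form $\alpha(x^3+y^3+z^3) + \beta\, xyz$ and its zero locus lies in the Hesse pencil. To match $E_{t,\lambda} := \Proj(\Lambda[x,y,z]/\overline{W^\ltl})$ with $X = \Proj(\Lambda[x,y,z]/W^{\LL_{0,-1}})$ one computes the $j$-invariant of the Hesse cubic as a function of $(t,\lambda)$ and checks it agrees with $j(X)$, which is consistent with the mirror picture since the $B$-side complex structure should not depend on the $A$-side deformation. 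Once the curves are identified, feeding $(\atl:\btl:\ctl)$ into the ATV recipe for the point scheme yields the translation $\sigma_{t,\lambda}$.

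The most delicate step is this identification: showing that the family of Hesse cubics $\overline{W^\ltl}$ defines a single abstract curve $X$ and tracking the linear change of coordinates on $\mathbb{P}^2$ that intertwines them, so that $\sigma_{t,\lambda}$ becomes an honest automorphism of the fixed reference $X$ rather than of a varying companion. Working over the Novikov field $\Lambda$ is not a serious obstacle since the ATV constructions are algebraic and characteristic zero, but one does need to trace how the theta-series form of $(\atl, \btl, \ctl)$ translates to the additive coordinate on the elliptic curve giving $\sigma$.
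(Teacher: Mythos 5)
The paper gives no argument for this statement at all---it is imported wholesale from \cite{ATV} (with the identification of the curve left implicit)---so your proposal has to be measured against what the ATV machinery actually provides. Your opening moves are the right ones and surely reflect the intended reading: for a nondegenerate triple $(\atl:\btl:\ctl)$ the degree-$3$ central elements of $Sky(\atl,\btl,\ctl)$ form a one-dimensional space, so the central cubic $W^\ltl$ of \cite{CHLau2} is a scalar multiple of the canonical ATV element $g$, and the ATV quotient theorem gives $\CAtl/(g)\cong B(E,\CL,\sigma)$ for the associated triple $(E,\CL,\sigma)$.

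The gap is in your identification of $E$. In \cite{ATV} the curve is the \emph{point scheme}, cut out by the determinant of the matrix of linear forms assembled from the quadratic relations; for the Sklyanin relations this is $(\atl^3+\btl^3+\ctl^3)xyz-\atl\btl\ctl(x^3+y^3+z^3)=0$. It is \emph{not} the vanishing locus of the symmetrization $\overline{W^\ltl}$, which is the a priori different Hesse cubic $3(a'+b')xyz+c'(x^3+y^3+z^3)$. The symmetrization of a central element has no reason to vanish on the point scheme: the correct vanishing statement evaluates $g=\sum c_{ijk}x_ix_jx_k$ on the twisted orbit $(p,\sigma p,\sigma^2 p)$, which reduces to $\overline{g}(p)=0$ only when $\sigma=\id$. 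So you must either work with the determinantal cubic directly or prove the (nontrivial) identity equating the two pencil parameters. Compounding this, the actual mathematical content of the statement---that the resulting curve is the \emph{fixed} cubic $X=\Proj(\Lambda[x,y,z]/W^{\LL_{0,-1}})$ for every $(t,\lambda)$---is only declared ``consistent with the mirror picture''; it requires genuine theta-series identities among $\atl,\btl,\ctl$ and is nowhere supplied. Finally, your nondegeneracy check does not cover the whole range: at $(t,\lambda)=(0,-1)$ the pairing $k\leftrightarrow -k-1$ forces $c_{0,-1}=0$ and $b_{0,-1}=-a_{0,-1}$, so $\CAtl$ degenerates to $\Lambda[x,y,z]$, whose point scheme is all of $\PP^2$; there the statement holds by the untwisted ($\sigma=\id$) Serre correspondence rather than by the ATV quotient theorem, and the ``lowest-order Novikov term'' argument breaks down.
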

Under the isomorphism $\CBtl \cong B_\sigma$ and the equivalence
$D^b Coh(X) \simeq D^b({\rm qgr-}B_{\sigma}),$ we have
\[ D^b Coh(X) \simeq D^b({\rm qgr-}\CBtl)\] 
and the objects $\CO$ and $\CO(1)$ correspond to $\pi B_{t,\lambda}$ and $\pi B_{t,\lambda}(1)$ in $D^b({\rm qgr-}\CBtl)$ respectively($\pi$ is the projection ${\rm gr-}\CBtl \to {\rm qgr-}\CBtl={\rm gr-}\CBtl/{\rm tors-}\CBtl$). Orlov's result can be applied to the ring $\CBtl:=\CAtl/W^\ltl$ since it is a Gorenstein algebra(of parameter $0$). Then we have
\[
\RR\omega_0(\pi B_{t,\lambda})=\bigoplus_{k=0}^\infty \RR \Hom_{D^b({\rm qgr-}B_{t,\lambda})}(\pi B_{t,\lambda},\pi B_{t,\lambda}(k)), \]
\[
 \RR\omega_0(\pi B_{t,\lambda}(1))=\bigoplus_{k=0}^\infty \RR \Hom_{D^b({\rm qgr-}B_{t,\lambda})}(\pi B_{t,\lambda},\pi B_{t,\lambda}(k+1)) \]
 which are both objects in $D^b({\rm gr-}\CBtl)$, and we can compute them exactly same as commutative case, i.e.
\[ \RR\omega_0(\pi \CBtl(1)) \simeq \CBtl(1)_{\geq 0},\] 
\[\RR\omega_0(\pi \CBtl)\simeq {\rm Cone}(\phi:\bk[-2] \to \CBtl)\]
from the fact that $\CBtl$ is a $2$-dimensional Gorenstein algebra with parameter $0$.


We recall another important algebraic property of Sklyanin algebras.
\begin{theorem}[\cite{IS}]
Any Sklyanin algebra is Koszul, i.e. the module $\bk$ admits a linear free resolution.
\end{theorem}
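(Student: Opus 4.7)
The plan is to construct an explicit minimal graded free resolution of $\bk$ as a left $A$-module where $A = Sky(a,b,c)$, and to verify that all its differentials have linear entries. Since $A$ has three degree-one generators $x,y,z$ and three quadratic defining relations, the expected form of the Koszul resolution is
\[ 0 \longrightarrow A(-3) \xrightarrow{d_3} A(-2)^3 \xrightarrow{d_2} A(-1)^3 \xrightarrow{d_1} A \longrightarrow \bk \longrightarrow 0, \]
where $d_1$ is multiplication by the column $(x,y,z)^t$, $d_2$ is given by the coefficient matrix of the three Sklyanin relations viewed as row vectors with entries in $A_1$, and $d_3$ is multiplication by a column of three linear forms encoding the unique syzygy among the relations.

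First, I would verify the necessary numerical condition, the Hilbert series identity $H_A(t) \cdot H_{A^!}(-t) = 1$. By the Artin--Tate--van den Bergh identification of $A$ with a twisted homogeneous coordinate ring $B_\sigma$ of an elliptic curve \cite{ATV}, the Hilbert series is $H_A(t) = 1/(1-t)^3$, matching the polynomial ring in three variables. A direct computation of the quadratic dual $A^!$ from the orthogonal complement of the three Sklyanin relations inside $(A_1^*)^{\otimes 2}$ then yields $H_{A^!}(t) = (1+t)^3$, so the identity holds.

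Second, I would upgrade this numerical fact to actual Koszulness by invoking the Artin--Schelter regularity of $A$ (of global dimension $3$ with Gorenstein parameter $3$), also established in \cite{ATV}. AS-regularity supplies a minimal projective resolution of $\bk$ of length $3$ whose Betti numbers are forced by Gorenstein duality to take the symmetric shape $(1,3,3,1)$. Combined with the Hilbert series identity above, this forces every differential in the minimal resolution to be linear, which is exactly the statement that $A$ is Koszul.

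The main obstacle is ensuring the argument is uniform across all parameter values $(a,b,c)$, including the degenerate and special fibers of the parameter space where the associated cubic acquires singularities or the automorphism $\sigma$ becomes special. The reference \cite{IS} addresses this uniformity by constructing the Koszul complex directly without a genericity hypothesis; in our setting we merely invoke the result as a black box, since what we actually need later is the existence of a linear resolution of $\bk$ over $\CAtl$ for every $(t,\lambda)$.
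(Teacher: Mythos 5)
The paper does not prove this statement at all: it is imported verbatim from \cite{IS}, so there is no internal argument to measure your proposal against. Your sketch instead follows the classical geometric route of \cite{ATV}: identify $Sky(a,b,c)$ with a twisted homogeneous coordinate ring of an elliptic curve, deduce $H_A(t)=(1-t)^{-3}$ and Artin--Schelter regularity of global dimension $3$, and then observe that for a quadratic AS-regular algebra with three degree-one generators and three quadratic relations the minimal free resolution of $\bk$ is forced to be $0\to A(-3)\to A(-2)^3\to A(-1)^3\to A\to\bk\to 0$, hence linear. That chain of implications is correct as far as it goes (and you are right that the Hilbert-series identity $H_A(t)H_{A^!}(-t)=1$ is only a necessary condition, with AS-regularity carrying the real weight), and it is genuinely different from what \cite{IS} does, namely a direct Gr\"obner-basis construction designed precisely to avoid the geometry.

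The gap is in the quantifier. ``Any Sklyanin algebra'' includes degenerate parameters --- for instance two of $a,b,c$ equal to zero, or $a^3=b^3=c^3$ --- for which $Sky(a,b,c)$ is \emph{not} a twisted homogeneous coordinate ring of an elliptic curve, is not AS-regular of global dimension $3$, and does not have Hilbert series $(1-t)^{-3}$: for $a=b=0$, $c\neq 0$ one gets $\Lambda\langle x,y,z\rangle/(x^2,y^2,z^2)$, whose degree-$3$ component has dimension $12$ rather than $10$ and whose minimal resolution of $\bk$ is infinite. Both pillars of your argument collapse there, and your closing paragraph simply re-cites \cite{IS} as a black box for exactly those cases; so the theorem in its stated generality is not proved by your argument, it is re-attributed. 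This is a real gap relative to the statement, though a harmless one relative to the paper: the algebras $\CAtl$ that actually occur are non-degenerate, since the quoted theorem of \cite{ATV} realizes $\CBtl$ as a twisted homogeneous coordinate ring of an elliptic curve, so your ATV-based argument does supply the explicit linear resolution of $\bk$ over $\CAtl$ that the proof of Theorem \ref{thm:main} uses.
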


Now, we state and prove our main theorem.
\begin{theorem}\label{thm:main}
Let $X$ be the mirror cubic ${\mathrm{Proj}}(R/W^{\LL_{0,-1}})$, where \[R=Sky(a_{0,-1},b_{0,-1},c_{0,-1})\cong\Lambda[x,y,z].\] Then we have the following commutative diagram of functors
\[\xymatrixrowsep{1pc}\xymatrix{
 D^\pi Fu(T^2) \ar[rr]^{\CS_i} \ar[dd]_{\mathrm{Polishchuk-Zaslow}}^{\widetilde{\CF}} & & D^\pi Fu(T^2)\ar[dd]^{\locmir_{gr}^{\ltl}} \\
 &  & \\
  D^b Coh(X) \ar[rr]_{\mathrm{Orlov}}^{\CG_i} & &HMF_\ZZ(W^\ltl),}\]
  where $\widetilde{\CF}$ is the Polishchuk-Zaslow's mirror equivalence, $\CG_i$ is Orlov's LG/CY correspondence functor, $\CS_i$ is induced by a symplectomorphism followed by a shift, and $\locmir_{gr}^{\ltl}$ is the localized mirror functor with respect to the Lagrangian $\ltl$.

\end{theorem}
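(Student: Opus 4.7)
The plan is to verify commutativity on a set of split-generators of $D^\pi Fu(T^2)$ and then invoke the universal property of the split-closed derived envelope together with the $\AI$-functoriality of both compositions. The Fukaya category of $T^2$ is well-known to be split-generated by the two Lagrangians $L_{(1,0)}$ and $L_{(1,-3)}$, so it suffices to compare $\CG_i \circ \widetilde{\CF}$ and $\locmir_{gr}^{\ltl} \circ \CS_i$ on the full $\AI$-subcategory spanned by these two objects. Three things must be supplied: (a) the explicit construction of the autoequivalence $\CS_i$ as a symplectomorphism of $T^2$ followed by a degree shift, chosen so as to match the target; (b) an identification of the two image objects in $HMF_\ZZ(W^\ltl)$ up to quasi-isomorphism of matrix factorizations; and (c) an identification of the induced maps on morphism spaces.

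For (b), I start from the bottom-left corner. Polishchuk-Zaslow sends the test Lagrangians to the structure sheaf $\CO$ and to the degree-one line bundle $\CO(1)$ on $X$, which under $D^b Coh(X) \simeq D^b({\rm qgr}\text{-}\CBtl)$ become $\pi \CBtl$ and $\pi \CBtl(1)$. Using that $\CBtl$ is a two-dimensional Gorenstein algebra with parameter $0$, the formulas recalled just before the statement give $\RR\omega_0(\pi \CBtl(1)) \simeq \CBtl(1)_{\geq 0}$ and $\RR\omega_0(\pi \CBtl) \simeq \mathrm{Cone}(\bk[-2] \to \CBtl)$. Applying Orlov's projection $q$ produces two distinguished objects of $D^{gr}_{sg}(\CBtl) = HMF_\ZZ(W^\ltl)$. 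Koszulness of $\CAtl$ from \cite{IS} makes the resolution of $\bk$ linear, so lifting it along the quotient by the central element $W^\ltl$ yields explicit matrix factorizations whose entries can be written down coordinate by coordinate.

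For (a) and (c), I then define $\CS_0$ so that $\CS_0(L_{(1,0)})$ and $\CS_0(L_{(1,-3)})$ are the Lagrangians whose Floer complexes with $\ltl$ reproduce those explicit matrix factorizations. Concretely, $\locmir_{gr}^{\ltl}$ sends a Lagrangian $L$ to the graded $\CAtl$-module $\CAtl \hat\otimes_{\Lambda_0} CF(\ltl, L)$ equipped with the Floer differential $m_1^b$, where $b = x(X_1+X_2+X_3) + y(Y_1+Y_2+Y_3) + z(Z_1+Z_2+Z_3)$; the entries are computed by direct enumeration of holomorphic strips, giving the same linear Sklyanin-type formulas as the Koszul differential above. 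On the morphism side, $\Hom(\CO,\CO(1))$ on the Orlov side is recognized as the degree-one part of $\CAtl$, namely the span of $x,y,z$, while the Floer complex $CF(\CS_0 L_{(1,0)}, \CS_0 L_{(1,-3)})$ is a standard Theta-function space; the matching then follows by the basis identification used in \cite{L}, once one checks that the higher $m_k^b$ respect the Sklyanin relations.

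The main obstacle is that every ingredient of the commutative argument of \cite{L} must be rechecked over the noncommutative base $\CAtl$. In particular, one must (i) handle all grading shifts with care using the twisting automorphism $\sigma$ and the bimodule formalism of Section 2; (ii) verify that Orlov's semiorthogonal decomposition and the identification of $\RR\omega_0$ go through verbatim for the Gorenstein algebra $\CBtl$, which relies essentially on the centrality of $W^\ltl$ in $\CAtl$; and (iii) confirm that the $\AI$-functor $\locmir_{gr}^{\ltl}$, constructed over the noncommutative base via \cite{CHLau2}, still maps the higher multiplications of the chosen Floer complexes to the matrix factorization structure maps of the Koszul-type model, so that the comparison morphism carries over unchanged when $x,y,z$ are no longer commutative. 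Once these technical noncommutative checks are in place, the matching reduces to the explicit diagram chase already performed in the commutative case.
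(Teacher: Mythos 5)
Your proposal follows essentially the same route as the paper: restrict to the split-generators $L_{(1,0)}$ and $L_{(1,-3)}$, compute the Orlov images of $\CO$ and $\CO(1)$ via $\RR\omega_0$ and the Koszul resolution of $\bk$ stabilized over $\CBtl$, define $\CS_0$ geometrically so that strip counts for $CF(\ltl,\cdot)$ reproduce those matrix factorizations, and match morphisms by counting the relevant triangles, with the same noncommutative caveats about the base $\CAtl$ and centrality of $W^\ltl$. The only point the paper handles that you gloss over is that the quadratic entries of the matrix factorizations are not computed explicitly but are pinned down by the requirement that $s_0+s_1$ factorize $W^\ltl$; this does not change the structure of the argument.
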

\begin{proof}
We can find the linear free resolution of $\bk=\CAtl/(x,y,z)$ over $\CAtl$ explicitly as follows:
\[\xymatrix{
0 \ar[r] & \CAtl(-3) \ar[r]^{{\left(\begin{smallmatrix}-x \\ -y \\ -z\end{smallmatrix}\right)}} & \CAtl(-2)^3 \ar[rr]^{\tiny\left(\begin{smallmatrix}\ctl x & \btl z & \atl y \\ \atl z & \ctl y & \btl x \\ \btl y & \atl x & \ctl z\end{smallmatrix}\right)}
& &  \CAtl(-1)^3 \ar[rr]^-{\left(\begin{smallmatrix}-x & -y & -z\end{smallmatrix}\right)} & & \CAtl\ar[r] & 0.}\]
The choice of signs of entries $-x$, $-y$, $-z$ instead of $x$, $y$ and $z$ is just irrelevant, but we made such a choice so that later we can compare objects and morphisms clearer. We denote the differential maps in total by $s_0$. The endomorphism $W^\ltl\cdot 1$ of the complex is homotopic to zero because $W^{\ltl}$ is in the ideal $(x,y,z)$, so we have a homotopy from $W^\ltl\cdot 1$ to the zero map by
\begin{equation}\label{whomotopy}
\xymatrix{
0 \ar[r] & \CAtl(-6) \ar[r]^{s_0} \ar[d]_{W^\ltl} & \CAtl(-5)^3 \ar[r]^{s_0} \ar[ld]^{s_1} \ar[d]^{W^\ltl}
& \CAtl(-4)^3 \ar[r]^-{s_0} \ar[d]^{W^\ltl}\ar[ld]^{s_1}  & \CAtl(-3)\ar[r] \ar[ld]^{s_1}\ar[d]^{W^\ltl} & 0 \\
0 \ar[r] & \CAtl(-3) \ar[r]_{s_0} & \CAtl(-2)^3 \ar[r]_{s_0}
&  \CAtl(-1)^3 \ar[r]_-{s_0} &  \CAtl\ar[r] & 0.
}
\end{equation}

Now we apply Dyckerhoff's spectral sequence argument(see Lemma 2.6 of \cite{Dyc} and its proof) to our situation and observe that the following double complex is a resolution of $\bk$ over $\CBtl$:
\[\xymatrix{
\ar[d] & \ar[d] \ddots \ar[d] & \ar[d] & & & \\
\CBtl(-6) \ar[r]^{s_0} & \CBtl(-5)^3 \ar[r]^{s_0} \ar[d]^{s_1} & \CBtl(-4)^3\ar[r]^{s_0} \ar[d]^{s_1} & 
\CBtl(-3) \ar[r] \ar[d]^{s_1} & 0 & \\
& \CBtl(-3) \ar[r]^{s_0} & \CBtl(-2)^3 \ar[r]^{s_0} & \CBtl(-1)^3 \ar[r]^-{s_0} & \CBtl \ar[r] & 0.}\]
There is no obstacle to apply Dyckerhoff's argument to construct the resolution of $\bk$ over a  noncommutative algebra, because the main feature of the argument is based on that $W^\ltl\cdot 1$ is nullhomotopic. Only difficulty in the noncommutative setting is that we cannot easily construct $s_1$ purely by an algebraic argument(i.e. solving homotopy equations directly) as we could do much easier for the commutative case. But the only requirement for $s_1$ is that the map $s_0 + s_1$ constructs a matrix factorization of $W^\ltl$, and this feature of $s_1$ is specific enough as we will see soon, so we do not try to compute $s_1$ explicitly.

From the above diagram, the resolution of $\bk[-2]$ over $\CBtl$ is given by
\[\xymatrixcolsep{2pc}\xymatrix{
 \cdots \ar[r] & {\CBtl(-6)\oplus  \CBtl(-5)^3}\ar[rrrr]^{\left(\begin{smallmatrix} 0& *& * & * \\ * &  \ctl x & \btl z & \atl y \\ * & \atl z & \ctl y & \btl x \\ *& \btl y &  \atl x & \ctl z\end{smallmatrix}\right)} 
 & & & &\CBtl(-3) \oplus \CBtl(-4)^3 }\]
 \[\xymatrixrowsep{0.2pc}\xymatrixcolsep{3.5pc}\xymatrix{
 \ar[r]^-{\left(\begin{smallmatrix}0 & -x & -y & -z \\-x & * & *& *
 \\-y & *& * &* \\-z &* & *& *\end{smallmatrix}\right)} &
  \CBtl(-3)\oplus \CBtl(-2)^3 \ar[rr]^-{\left(\begin{smallmatrix} * &  \ctl x & \btl z & \atl y \\ * & \atl z & \ctl y & \btl x \\ * & \btl y &  \atl x & \ctl z\end{smallmatrix}\right)} & & \CBtl(-1)^3 \\
  &  \fbox{{\rm 0th}} & & \fbox{{\rm 1st}} \\
  \ar[r]^-{\left(\begin{smallmatrix} -x&-y&-z\end{smallmatrix}\right)}  & \CBtl \ar[r] & 0  \\ 
 &  \fbox{{\rm 2nd}} & &}
\]
and a map $\bk[-2] \to \CBtl$ is given by $ \phi: \CBtl(-3)\oplus \CBtl(-2)^3 \to \CBtl $
with \[ \phi \circ \left(\begin{matrix}0 & -x & -y & -z \\-x & * & *& *
 \\-y & *& * &* \\-z &* & *& *\end{matrix}\right)=0.\] 
 
 A homotopically nontrivial map $\phi$ is given by $\left(\begin{matrix}0 & Q_{12} & Q_{13} & Q_{14}\end{matrix}\right)$ which is the first row of the differential $\CBtl(-6)\oplus \CBtl(-5)^3 \to \CBtl(-3) \oplus \CBtl(-4)^3.$ Any other row gives a chain map but it is nullhomotopic. Hence, we get ${\rm Cone}(\phi:\bk[-2]\to \CBtl)=\RR \omega_0(\pi \CBtl)$ by
 
\[\xymatrixrowsep{0.2pc}\xymatrixcolsep{4.5pc}\xymatrix{
\cdots \ar[r]^-{\left(\begin{smallmatrix}0 & -x & -y & -z \\-x & * & *& *
 \\-y & *& * &* \\-z &* & *& *\end{smallmatrix}\right)}  & \CBtl(-3)\oplus \CBtl(-2)^3 \ar[rr]^{\left(\begin{smallmatrix} 0& *& * & * \\ * &  \ctl x & \btl z & \atl y \\ * & \atl z & \ctl y & \btl x \\ *& \btl y &  \atl x & \ctl z\end{smallmatrix}\right)} 
 & &
 \CBtl \oplus \CBtl(-1)^3 \\ & & &\fbox{{\rm 0th}} }\]
\[\xymatrixrowsep{0.2pc}\xymatrixcolsep{4pc}\xymatrix{ \ar[r]^-{\left(\begin{smallmatrix} 0&-x&-y&-z\end{smallmatrix}\right)}
 &\CBtl  \ar[r] & 0. \\
   &  \fbox{{\rm 1st}} & & 
 }\]

Since $\RR^i\omega_0 (\pi \CBtl(1))$ is nonzero only for $i=0$, $\RR\omega_0(\pi\CBtl(1))$ is isomorphic to the resolution of $\RR^0\omega_0(\pi\CBtl(1))\cong \CBtl(1)_{\geq 0}$, and the resolution is 
\[\xymatrixcolsep{2.6pc}\xymatrix{
\ar[r] & \CBtl(-2) \oplus \CBtl(-3)^3 \ar[rr]^-{\left(\begin{smallmatrix}0 & -x & -y & -z \\-x & * & *& *
 \\-y & *& * &* \\-z &* & *& *\end{smallmatrix}\right)} & & \CBtl(-2)\oplus \CBtl(-1)^3}\]
 \[\xymatrixcolsep{2.6pc}\xymatrix{ \ar[rrr]^-{\left(\begin{smallmatrix}* &  \ctl x & \btl z & \atl y \\ * & \atl z & \ctl y & \btl x \\ * & \btl y &  \atl x & \ctl z\end{smallmatrix}\right)} & & & \CBtl^3 \ar[r]
 & 0}\]
whose augmentation map is 
\[ \left(\begin{array}{cccc}x& y & z\end{array}\right):\CBtl^3\to {\CBtl}(1)_{\geq 0}.\]

Now we apply Polishchuk-Zaslow's mirror functor to Lagrangians $L_{(1,0)}$ and $L_{(1,-3)}$ with trivial holonomies, so that we obtain $\CO_X$ and $\CO_X(1)$. There are three morphisms(i.e. intersections) from $L_{(1,0)}$ to $L_{(1,-3)}$, and the mirror correspondence of \cite{PZ} is given by
\[ (0,0):L_{(1,0)} \to L_{(1,-3)} \longleftrightarrow y: \CO_X\to \CO_X(1),\]
\[ \big(0,\frac{1}{3}\big):L_{(1,0)} \to L_{(1,-3)} \longleftrightarrow x: \CO_X\to \CO_X(1),\]
\[ \big(0,\frac{2}{3}\big):L_{(1,0)} \to L_{(1,-3)} \longleftrightarrow z: \CO_X\to \CO_X(1).\]
Applying Orlov's functor, the maps $x,y,z: \CO_X \to \CO_X(1)$ correspond to $\phi_x,\phi_y,\phi_z: \RR\omega_0(\pi \CBtl) \to \RR\omega_0(\pi \CBtl(1))$, which induce $x,y,z: \CBtl \to \CBtl(1)_{\geq 0}$ on 0th cohomologies. $\phi_x$, $\phi_y$ and $\phi_z$ are determined by maps 
\[\phi_x^0,\;\phi_y^0,\;\phi_z^0: \CBtl\oplus \CBtl(-1)^3 \to \CBtl^3\]
and for them to induce $x$, $y$ and $z$ from $\CBtl$ to $\CBtl(1)_{\geq 0}$, they are expressed as
\[ \phi_x^0=\left(\begin{array}{cccc}1 & * & * & * \\ 0 & * & * & * \\ 0 & * & * & *\end{array}\right),\;\phi_y^0=\left(\begin{array}{cccc}0 & * & * & * \\ 1 & * & * & * \\ 0 & * & * & *\end{array}\right),\;
\phi_z^0=\left(\begin{array}{cccc}0 & * & * & * \\ 0 & * & * & * \\ 1 & * & * & *\end{array}\right).\] 

Similarly, for degree 1 morphisms we have the mirror correspondence
\[ (0,0):L_{(1,-3)} \to L_{(1,0)} \longleftrightarrow y^*: \CO_X(1)\to \CO_X[1],\]
\[ \big(0,\frac{1}{3}\big):L_{(1,-3)} \to L_{(1,0)} \longleftrightarrow x^*: \CO_X(1)\to \CO_X[1],\]
\[ \big(0,\frac{2}{3}\big):L_{(1,-3)} \to L_{(1,0)} \longleftrightarrow z^*: \CO_X(1)\to \CO_X[1]\]
and maps from $\RR\omega_0(\CO_X(1)) \to \RR\omega_0(\CO_X[1])$ corresponding to $x^*$, $y^*$ and $z^*$ are determined by maps $\psi_x^0,\;\psi_y^0,\;\psi_z^0\;:\CBtl^3 \to \CBtl$. The duality between $\{x,y,z\}$ and $\{x^*,y^*,z^*\}$ is achieved only if we define
\[ \psi_x^0=\left(\begin{array}{ccc}1 & 0 & 0\end{array}\right),\; \psi_y^0=\left(\begin{array}{ccc}0 & 1 & 0\end{array}\right),\; \psi_z^0=\left(\begin{array}{ccc}0 & 0 & 1\end{array}\right).\]

Now, let $\CS_0$ be a symplectomorphism of $T^2$ given by an affine transformation followed by a translation, such that it maps $L_{(1,0)}$ to $\LL^{vert}_{t,\lambda}$ and $L_{(1,-3)}$ to $\tau(\LL^{vert}_{t,\lambda})$. If we send $L_{(1,0)}$ and $L_{(1,-3)}$ by $\locmir^{\LL_{t,\lambda}} \circ\CS_0$, then we get two $4 \times 4$ matrix factorizations. Let $L_s=(\phi_s\circ \CS_0)(L_{(1,0)})$ where $\phi_s$ is a small Hamiltonian perturbation, and $L=\CS_0(L_{(1,0)}).$ Similarly, let $L':=\CS_0(L_{(1,-3)}).$ See Figure \ref{44mf}. 
\begin{figure}
\includegraphics[height=3in]{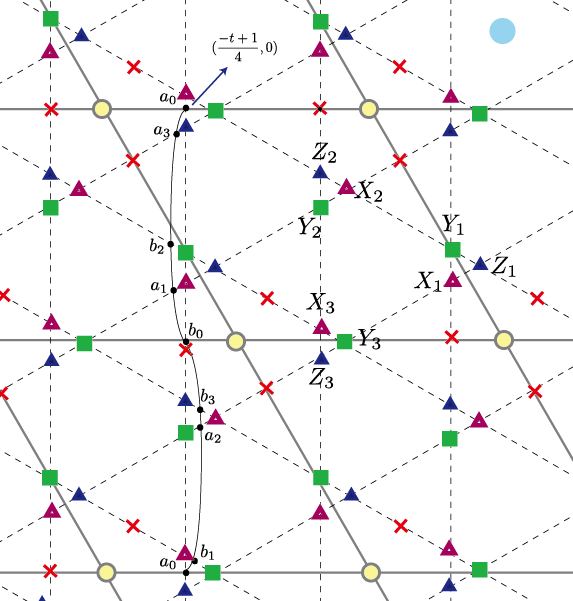}
\caption{$CF(\ltl,L_s)$ generated by even intersections $a_0,\cdots,a_3$ and odd generators $b_0,\cdots,b_3$.}
\label{44mf}
\end{figure}

Linear entries of their differentials are rather easily computed as follows. The strips which become area 0 after taking the limit $\phi_s \to \id$ are those $a_0 \mapsto b_i$ and $a_i \mapsto b_0$ for $i=1,\cdots,3$. They all have negative signs. Other strips(of nonzero areas) which give linear entries are identified with polygons which give terms of Maurer-Cartan relations. For example, let us consider strips from $b_1$ to $a_1,a_2,a_3$. Under the limit $\phi_s \to \id$, the strips from $b_1$ to $a_1$ gives a multiple of $x$ and they exactly correspond to strips which give the $x^2$-term in $h_{\bar{X}}$. In the same way the strips $b_1 \mapsto a_2$ correspond to those which give $xy$-term of $h_{\bar{Z}}$, and $b_1 \mapsto a_3$ correspond to the $xz$-term in $h_{\bar{Y}}$. The signs of strips $b_1 \mapsto a_i$ and those of corresponding polygons coincide.

Summarizing, the matrix factorization $CF(\ltl,L)$ is given by

\[\xymatrixcolsep{2pc}\xymatrix{
\ar[r]&
\CAtl(-3) \oplus \CAtl(-4)^3  \ar[rrr]^-{\left(\begin{smallmatrix}0 & -x & -y & -z 
 \\-x & Q'_{22} & Q'_{23}& Q'_{24}
 \\-y & Q'_{32} & Q'_{33} & Q'_{34} \\-z &Q'_{42} & Q'_{43} & Q'_{44}\end{smallmatrix}\right)}
   & & &\CAtl(-3)\oplus \CAtl(-2)^3}\] 
\[\xymatrixcolsep{2pc}\xymatrix{  
 \ar[rrr]^-{\left(\begin{smallmatrix} 0& Q_{12}& Q_{13} & Q_{14} \\ Q_{21} &  \ctl x & \btl z & \atl y \\ Q_{31} & \atl z & \ctl y & \btl x \\ Q_{41} & \btl y &  \atl x & \ctl z\end{smallmatrix}\right)} 
 & & & \CAtl\oplus \CAtl(-1)^3 \ar[r] & \cdots}\]
 with quadratic entries $Q_{ij}$ and $Q'_{ij}$.

Similarly, $CF(\ltl,L')$ is computed as

\[\xymatrixcolsep{2pc}\xymatrix{
\ar[r] &
\CAtl(-2) \oplus \CAtl(-3)^3  \ar[rrr]^-{\left(\begin{smallmatrix}0 & -x & -y & -z 
\\ -x & Q'_{22} & Q'_{23}& Q'_{24}
 \\-y & Q'_{32} & Q'_{33} & Q'_{34} 
 \\-z &Q'_{42} & Q'_{43} & Q'_{44}\end{smallmatrix}\right)} 
 & && \CAtl(-2)\oplus \CAtl(-1)^3 }\]
 \[\xymatrix{\ar[rrr]^-{\left(\begin{smallmatrix} 0& Q_{12}& Q_{13} & Q_{14} \\ Q_{21} &  \ctl x & \btl z & \atl y \\ Q_{31} & \atl z & \ctl y & \btl x \\ Q_{41} & \btl y &  \atl x & \ctl z\end{smallmatrix}\right)} 
 & & &
 \CAtl(1)\oplus \CAtl^3 \ar[r]& \cdots .}\]
Now it is clear that these matrix factorizations coincide with above stabilizations of $\bk$ and $\CBtl(1)_{\geq 0}$ respectively as objects of $D^{gr}_{sg}(\CBtl)$, because the quadratic entries contribute as a homotopy $s_1$ in (\ref{whomotopy}).

Morphisms between $L_{(1,0)}$ and $L_{(1,-3)}$ are mapped via $\locmir^\ltl \circ \CS_0$ as follows. It suffices to compute area $0$(after the limit $t \to 0$) holomorphic triangles because they completely classify representatives of morphisms of matrix factorizations. We count area $0$ triangles in Figure \ref{landlprime} whose boundaries are schematically as Figure \ref{schematic}. 

\begin{figure}
\includegraphics[height=2.3in]{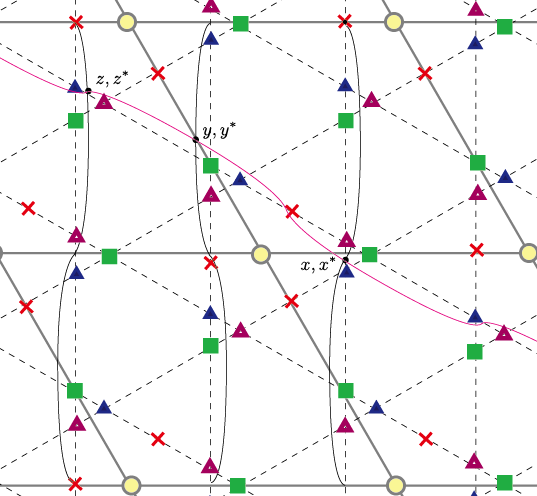}
\caption{Intersections between $\CS_0(L)$ and $\CS_0(L')$.}
\label{landlprime}
\end{figure}

\begin{figure}
\includegraphics[height=1.3in]{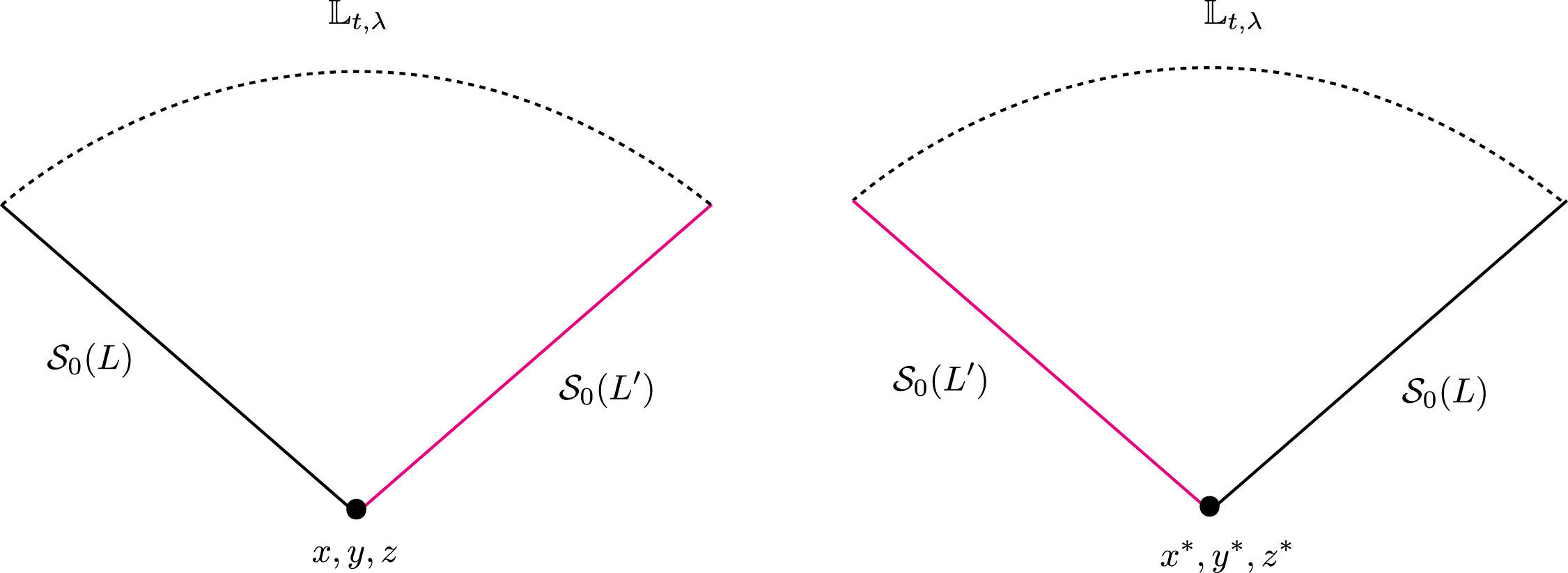}
\caption{The left one expresses degree 0 morphisms from $\locmir^\ltl(\CS_0(L))$ to $\locmir^\ltl(\CS_0(L'))$, and the right one corresponds to degree 1 morphisms the other way.}
\label{schematic}
\end{figure}

The first conclusion is that degree 0 morphisms from $\locmir^\ltl(\CS_0(L))$ to $\locmir^\ltl(\CS_0(L'))$ are given by the following:

\[\xymatrixcolsep{2pc}\xymatrix{
\ar[r]&
\CAtl(-3) \oplus \CAtl(-4)^3 \ar[dd]_{p^{-2},q^{-2},r^{-2}} \ar[rrr]^-{\left(\begin{smallmatrix}0 & -x & -y & -z 
 \\-x & Q'_{22} & Q'_{23}& Q'_{24}
 \\-y & Q'_{32} & Q'_{33} & Q'_{34} \\-z &Q'_{42} & Q'_{43} & Q'_{44}\end{smallmatrix}\right)}
   & & &\CAtl(-3)\oplus \CAtl(-2)^3 \ar[dd]^{p^{-1},q^{-1},r^{-1}} \ar[r]
 & \cdots \\ \\ 
 \ar[r] &
\CAtl(-2) \oplus \CAtl(-3)^3  \ar[rrr]^-{\left(\begin{smallmatrix}0 & -x & -y & -z 
\\ -x & Q'_{22} & Q'_{23}& Q'_{24}
 \\-y & Q'_{32} & Q'_{33} & Q'_{34} 
 \\-z &Q'_{42} & Q'_{43} & Q'_{44}\end{smallmatrix}\right)} 
 & && \CAtl(-2)\oplus \CAtl(-1)^3 \ar[r]
 &
 \cdots }\]
 where the intersection $x$ induces
 \[ p^{2i}= \left(\begin{array}{cccc} 0& * & * & * \\ 1 & * & * & * \\ 0 & * & * & * \\ 0 & * & * & * \end{array}\right), \;p^{2i-1}=\left(\begin{array}{cccc} 0 & 1 & 0 & 0 \\ * & * & * & * \\ * & * & * & * \\ * & * & * & * \end{array}\right)\]
 and $y$, $z$ induce morphisms $q$ and $r$ respectively as following:
 \[ q^{2i}=\left(\begin{array}{cccc} 0 & * & * & * \\ 0 & * & * & * \\ 1 & * & * & * \\ 0 & * & * & * \end{array}\right), \; q^{2i-1}=\left(\begin{array}{cccc} 0 & 0 & 1 & 0 \\ * & * & * & * \\ * & * & * & * \\ * & * & * & * \end{array}\right),\]
 \[r^{2i}=\left(\begin{array}{cccc} 0 & * & * & * \\ 0 & * & * & * \\ 0 & * & * & * \\ 1 & * & * & * \end{array}\right), \; r^{2i-1}=\left(\begin{array}{cccc} 0 & 0 & 0 & 1 \\ * & * & * & * \\ * & * & * & * \\ * & * & * & * \end{array}\right).\]
 
The same computation gives degree 1 morphisms from $\locmir^\ltl(\CS_0(L'))$ to $\locmir^\ltl(\CS_0(L))$ as follows.
 
\[\xymatrixcolsep{2pc}\xymatrix{
 \ar[r] &
\CAtl(-2) \oplus \CAtl(-3)^3 \ar[dd]_{p'^{-2},q'^{-2},r'^{-2}} \ar[rrr]^-{\left(\begin{smallmatrix}0 & -x & -y & -z 
\\ -x & Q'_{22} & Q'_{23}& Q'_{24}
 \\-y & Q'_{32} & Q'_{33} & Q'_{34} 
 \\-z &Q'_{42} & Q'_{43} & Q'_{44}\end{smallmatrix}\right)} 
 & && \CAtl(-2)\oplus \CAtl(-1)^3 \ar[dd]^{p'^{-1},q'^{-1},r'^{-1}} \ar[r]
  &
 \cdots \\ \\ 
\ar[r]&
\CAtl(-3) \oplus \CAtl(-2)^3  \ar[rrr]^-{\left(\begin{smallmatrix} 0& Q_{12}& Q_{13} & Q_{14} \\ Q_{21} &  \ctl x & \btl z & \atl y \\ Q_{31} & \atl z & \ctl y & \btl x \\ Q_{41} & \btl y &  \atl x & \ctl z\end{smallmatrix}\right)}
   & & &\CAtl(-3)\oplus \CAtl(-2)^3 \ar[r]
 &\cdots
  }\] 
The intersections $x^*$, $y^*$ and $z^*$ induce morphisms by $p'$, $q'$ and $r'$ respectively and they are represented by
\[ p'^{2i}=\left(\begin{array}{cccc}0 & 1 & 0 & 0 \\1 & * & * & * \\0 & * & * & * \\0 & * & * & *\end{array}\right),\;q'^{2i}=\left(\begin{array}{cccc}0 & 0 & 1 & 0 \\0 & * & * & * \\1 & * & * & * \\0 &  * & * & *\end{array}\right),\; r'^{2i}=\left(\begin{array}{cccc}0 & 0 & 0 & 1 \\0 & * & * & * \\0 & * & * & * \\1 &  * & * & *\end{array}\right).\]

It is now straightforward that these morphisms coincide with previous one constructed from Polishchuk-Zaslow's mirror functor followed by Orlov's LG/CY functor.

The description of $\CS_i$ for all $i\in \ZZ$ is only slightly different from the commutative case, which was dealt with in \cite{L}. Let $j=\lfloor -\frac{i}{3} \rfloor$, $d=-i-3j$, so that $d=0$, $1$ or $2$. Recall the notation $\tau$ for $\ZZ/3$-action, and let $S_i$ be a symplectomorphism $\tau^d\circ \phi_i$, where $\phi_i$ is a symplectomorphism which maps the graded Lagrangian $L_{(1,3i)}$ to the graded Lagrangian $\LL^{vert}_{t,\lambda}$, and the graded Lagrangian $L_{(1,3i-3)}$ to the graded Lagrangian $\tau(\LL^{vert}_{t,\lambda})$. Now, the functor $\CS_i$ is given by
\[ \CS_i= [-j]\circ S_i\]
and its proof is parallel to the commutative case in \cite{L}.
\end{proof}

\bibliographystyle{amsalpha}

\end{document}